\documentclass{amsproc}

\usepackage{amsmath,amsthm,amssymb,enumitem,tikz-cd}
\usepackage[colorlinks=true]{hyperref}
\hypersetup{linkcolor=blue, urlcolor=blue, citecolor=blue}
\usepackage{cleveref}
\usepackage{verbatim}

\newtheorem{theorem}{Theorem}[section]
\newtheorem{lemma}[theorem]{Lemma}

\theoremstyle{definition}
\newtheorem{definition}[theorem]{Definition}
\newtheorem{example}[theorem]{Example}

\newtheorem{corollary}[theorem]{Corollary}

\theoremstyle{remark}
\newtheorem{remark}[theorem]{Remark}

\numberwithin{equation}{section}

\newcommand{\NN}{\mathbf{N}}

\newcommand{\fm}{\mathfrak{m}}
\newcommand{\fb}{\mathfrak{b}}
\newcommand{\fp}{\mathfrak{p}}
\newcommand{\fq}{\mathfrak{q}}
\newcommand{\Ann}{\operatorname{Ann}}

\newcommand{\Spec}{\operatorname{Spec}}

\newcommand{\hgt}{\operatorname{ht}}
\newcommand{\Min}{\operatorname{Min}}
\newcommand{\fbp}[1]{\left[ #1 \right]}
\newcommand{\fte}{\operatorname{Fte}}
\newcommand{\fdp}{F\text{-depth}\,}
\newcommand{\gfdp}{gF\text{-depth}\,}
\newcommand{\findim}{\operatorname{findim}}
\newcommand{\HSL}{\operatorname{HSL}}

\newcommand{\fQ}{\mathfrak{Q}}

\newcommand{\WN}{\textrm{WN}}

\begin{document}

% \title[short text for running head]{full title}
\title{}

%    Only \author and \address are required; other information is
%    optional.  Remove any unused author tags.

%    author one information
% \author[short version for running head]{name for top of paper}
\author{Kyle Maddox}
\address{}
\curraddr{}
\email{}
\thanks{}

%    author two information
\author{Lance Edward Miller}
\address{}
\curraddr{}
\email{}
\thanks{}

%    The 2020 edition of the Mathematics Subject Classification is
%    the current definitive version.
\subjclass[2020]{Primary 13A35, 13D45}

\title{\texorpdfstring{$F$}{F}-depth and \texorpdfstring{$F$}{F}-nilpotent rings: generalizations and applications}
%\author{Kyle Maddox and Lance Edward Miller}

\begin{abstract}
Computation of the Frobenius closure of ideals in rings of prime characteristic is a difficult problem. For a given ideal, its Frobenius test exponent provides a valuable degree of uniformity in performing the calculation. Hence, it is desirable to find uniform upper bounds on the Frobenius test exponent of ideals. Even in nice rings of low dimension, Brenner showed considering the collection of all ideals is generally hopeless, but for Cohen-Macaulay rings, Katzman-Sharp there are uniform upper bounds on the Frobenius test exponent for the class of parameter ideals. Subsequent efforts in controlling the Frobenius test exponents have typically involved studying the Frobenius action on local cohomology and the degree to which this action is nilpotent. 

The goal of this survey article is to examine the history of the Frobenius test exponent problem and its relationships to singularity types for local rings defined in terms of the Frobenius action on local cohomology. We also explore several generalizations of these prior results to a setting where less nilpotence in the Frobenius action is assumed. 

\end{abstract}

\maketitle
%\tableofcontents

\section{Introduction}

Throughout, we consider excellent noetherian rings of prime characteristic $p>0$. Over the last five years, there as been a flurry of research concerning a family of singularity classes related to $F$-nilpotent singularities. These singularities have a relatively short history, introduced formally in \cite{ST17}, though the concept appears earlier \cite{BB05,Lyu97} in different guises. In this short time, these singularities and their variants have occupied an indispensable place in the computation of Frobenius closures of parameter ideals $\fq$, which is the set of elements $\fq^F$ for which $F^e(x) \in F^e(\fq)R$ for some $e$. In a noetherian ring, there is an $e$ such that $x \in \fq^F$ if and only if $x^{p^e} \in F^e(\fq)R$, the least such $e$ is called the \textit{Frobenius test exponent} of $\fq$, written $\fte \fq$. As $\fq$ varies, so might $\fte \fq$, and so it is desirable to have uniform upper bounds on the Frobenius test exponent independent of $\fq$. These upper bounds were first shown to exist by Katzman-Sharp \cite{KS06} and Huneke-Katzman-Sharp-Yao \cite{HKSY06} in the setting of Cohen-Macaulay and generalized Cohen-Macaulay rings respectively. Later work by Quy \cite{Quy19} and Maddox \cite{Mad19} expanded these results to $F$-nilpotent rings and generalizations thereof, which are types of local rings defined in terms of the Frobenius action on local cohomology. A number of recent articles have studied both these singularity types and the Frobenius test exponent problem; see \cite{CMM23,DMP24,HQ19,HQ20,HQ23,KMPS23,MM24,MP23,MS25,PQ19}. The goal of this survey is to give a guided tour through this rapidly advancing literature to help facilitate researchers interested in entering the subject. 

We also aim to expand the main results of several of these articles using the technology of the generalized $F$-depth with respect to an ideal introduced in \cite{CMM23}. Lyubeznik introduced the \textit{$F$-depth} of a local ring in \cite{Lyu06}, generalizing ideas from \cite{HS77}, as the least $j$ for which $H^j_\fm(R)$ does not vanish under any iterate of its natural Frobenius action. $F$-depth plays a role similar to depth in the study of these nilpotent singularity types.

\subsection{\texorpdfstring{$F$}{F}-nilpotent singularities} The primary singularities considered here are variants of $F$-nilpotent singularities as introduced by Blickle-Bondu in \cite{BB05} under the name ``close to $F$-rational". The next major work to feature them and study their properties directly was \cite{ST17}, where Srinivas-Takagi gave them the name $F$-nilpotent. We assume here the audience has some familiarity with the singularities that arise naturally from the Frobenius map, sometimes called $F$-singularities; we refer to \cite{MP} or \cite{SS} for a detailed review. Our first aim is to place $F$-nilpotent singularities in comparison to other $F$-singularities that are more well understood. In the main body of the survey, we revisit the subject of $F$-nilpotent singularities (see \Cref{sec:Fnil}).

Like many other $F$-singularities, the $F$-nilpotent condition is defined in terms of how the Frobenius acts on local cohomology. Let $(R,\fm)$ be a local ring of dimension $d$, and recall the Frobenius map $F:R\rightarrow R$ by $r\mapsto r^p$ acts on local cohomology modules $H_\fm^j(R)$. By minor abuse of notation, we write $F : H_\fm^j(R) \to H_\fm^j(R)$ for the additive map induced by the Frobenius map on $R$. A class $\eta \in H_\fm^j(R)$ is {\it nilpotent} provided $F^e(\eta) = 0$ for some $e$. The class of $F$-nilpotent rings are those for which the local cohomology is ``as nilpotent as possible'' under $F$. Specifically, one can ask, for fixed $j < d$, that every element of $H_\fm^j(R)$ is nilpotent. When this happens for all $j < d$, $R$ is called {\it weakly $F$-nilpotent}. The Frobenius map cannot be nilpotent on $H^{d}_\fm(R)$, and the submodule of elements of $H^d_\fm(R)$ nilpotent under Frobenius is inside the tight closure of $0$ (see \Cref{subsec: frob action} for definitions). We say $R$ is {\it $F$-nilpotent} if it is weakly $F$-nilpotent and every element of the tight closure of $0$ in $H^{d}_\fm(R)$ is nilpotent.

In contrast to $F$-nilpotent singularities, we have the more well-known $F$-injective singularities. These are the singularities for which $F:H^j_\fm(R) \rightarrow H^j_\fm(R)$ is injective for all $0 \le j \le \dim R$. Of course, when Frobenius acts both injectively on $H_\fm^j(R)$ and $H_\fm^j(R)$ is also nilpotent under Frobenius, we must have that $H_\fm^j(R)$ vanishes. Thus it is immediate to see that weakly $F$-nilpotent $F$-injective rings are Cohen-Macaulay. Additionally, $F$-rational singularities are precisely those that are both $F$-nilpotent and $F$-injective.

\begin{center}
\begin{tikzcd}%[column sep=huge]
 %      \text{A}\arrow[r,bend left=10, Rightarrow,"\text{\cref{thm: CMFI-pun}}" {yshift=0.5in}]& \text{B}\arrow[l,bend left=10,sloped,Rightarrow,"+F\text{-injective}"]\arrow[l,bend left=10,sloped,Rightarrow,"\text{\cref{rmk: f-inj  + buchs = f-buchs}}" {yshift=-0.5in}] \\
 \text{F-rational} \arrow[r,Rightarrow] & \text{F-injective} \arrow[l,bend left=30,Rightarrow,"\text{+F-nilpotent}" {yshift=-0.1in}]
\end{tikzcd}
\end{center}

The $F$-nilpotent singularities were considered by Blickle-Bondu \cite{BB05} as ``close to $F$-rational'' and arose naturally in the study of Lyubeznik numbers. However, Srinivas-Takagi take an approach to study $F$-nilpotent singularities via Hodge theory. Specifically, they conjecture that complex singularities which reduce to $F$-nilpotent singularities for all but finitely many $p$ satisfies certain vanishing in the Hodge filtration \cite[Conj. $H_n$]{ST17}. The story here goes deeper, which relates Conjecture $H_n$ to an arithmetic variant \cite[Conj. $N_n$]{ST17} that is closely related to the Weak-Ordinarity conjecture, and thus to relationship between $F$-injective singularities and Du Bois singularities \cite{BST17}. This highlights that $F$-nilpotent singularities are an interesting and important class of $F$-singularities.

The class of weakly $F$-nilpotent singularities are less well understood, but arises naturally with applications in the finiteness problem for Frobenius test exponents. We note however it is somewhat more straightforward to motivate weakly $F$-nilpotent singularities via the analogy to Cohen-Macaulayness. In some sense, these appeared even earlier in that Lyubeznik's theory of $F$-depth can be used to define weakly $F$-nilpotent local rings as those for which the $F$-depth is equal to the dimension.

\subsection{Finiteness of Frobenius test exponents} 

The Frobenius closure of an ideal $I$ in a ring $R$ of prime characteristic $p>0$ is the ideal $I^F$ of elements $x$ for which $x^{p^e} \in F^e(I)R$. If $R$ is a noetherian regular ring, then flatness of the Frobenius map implies $I^F=I$ for all ideals $I$, but in singular rings one can have nontrivial Frobenius closure relations. Hence, computation of Frobenius closure of ideals is interesting from a geometric standpoint, since it relates to singularities of the ring.

Given an ideal $I$ in a noetherian ring $R$, there is an $e$ so that $I^F = \{ x \in R \mid x^{p^e} \in F^e(I)R\}$. The least such $e$ is called the \textbf{Frobenius test exponent} of $I$, written $\fte I$. If $e_0 = \fte I$, then $I^F$ is the preimage under $F^e$ of $F^e(I)R$ for all $e \ge e_0$. Thus, for computational purposes, it is desirable to know upper bounds on $\fte I$, and one can even hope for uniform upper bounds on $\fte I$ over all ideals $I$ in the ring $R$. However, Brenner showed in \cite{Bre06} that $\{\fte I \mid I \subset R \}$ can be an unbounded set even in nice rings of low dimension. 

Due to natural connections between parameter ideals (that is, ideals generated by a system of parameters) and local cohomology modules, Katzman-Sharp showed in \cite[Thm. 2.4]{KS06} that Cohen-Macaulay local rings have a uniform upper bound on the Frobenius test exponent of parameter ideals in terms of the Hartshorne-Speiser-Lyubeznik number of $R$ (see \Cref{sec:HSL}). For this reason, we say the \textbf{Frobenius test exponent} of the ring is $\fte R = \sup\{\fte \fq \mid \fq \subset R \text{ is a parameter ideal}\}$. Outside the Cohen-Macaulay setting, results bounding $\fte R$ typically assume the lower local cohomology modules of $R$ are ``close" to being nilpotent under the canonical Frobenius action on local cohomology. For a history of results since Katzman-Sharp in bounding Frobenius test exponents, see \Cref{sec:genfte}.  

Frobenius test exponents also have applications outside of the computation of Frobenius closure. One common application is in the area of multiplicity, where first Huneke-Watanabe showed in \cite{HW15} that $F$-pure local rings of dimension $d$ have Hilbert-Samuel multiplicity bounded above by $\binom{v}{d}$, where $v$ is the embedding dimension of $R$. Later, Katzman-Zhang showed in \cite{KZ19} that the same bound holds under the weaker assumption that $R$ is $F$-injective and generalized Cohen-Macaulay, and for a reduced Cohen-Macaulay ring that the Hilbert-Samuel multiplicity of the ring is $C\binom{v}{d}$, where $C = (p^{\HSL R})^{v-d}$. Finally, Huong-Quy showed in \cite{HQ20} that, for a local ring $R$ with finite Frobenius test exponent $\fte R = e_0$, that the Hilbert-Samuel mutiplicity of $R$ is bounded by $(p^{e_0})^{v-d}\binom{v}{d}$.

The general question of bounding $\fte I$ over all ideals $I$ has enjoyed fewer positive results. Recently, Maddox-Singh showed in \cite{MS25} that, for an affine semigroup ring defined over a field of prime characteristic, the Frobenius test exponent of all ideals has a uniform upper bound given in terms of the combinatorics of the underlying semigroup.

\subsection{Summary of the main results}  We now summarize the new results in the survey. Our perspective is expanding prior results by substituting $F$-depth for the \textit{generalized $F$-depth with respect to $J$}, denoted $\gfdp_J(R)$,  introduced in \cite{CMM23}. This depth-like invariant captures local information about the $F$-depth on the open set $\Spec R \setminus V(J)$. In particular, we prove the following new result, which generalizes \cite[Prop. 4.6]{KMPS23}. 

\begin{theorem}[\Cref{thm:replacepuncspec}]
    Let $(R,\fm)$ be an $F$-finite, equidimensional local ring of prime characteristic $p>0$ and dimension $d$, and let $J\subset R$ be an ideal. Then, $\gfdp_J(R) \ge t$ if and only if $\fdp(R_\fp) \ge t-d+\hgt \fp$ for all $\fp \in \Spec R \setminus V(J)$.
\end{theorem}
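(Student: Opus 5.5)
The plan is to translate both sides into support conditions on a family of finitely generated modules, via local duality and Cartier stabilization, and then compare them prime by prime.

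\textbf{Setup.} Since $R$ is $F$-finite, we may write $R=S/I$ with $(S,\fn)$ an $F$-finite regular local ring of dimension $n$, and we may take $R$ complete after harmless reductions. By local duality, $H^j_\fm(R)^\vee\cong E^{n-j}:=\operatorname{Ext}^{n-j}_S(R,S)$. Under this duality, the Frobenius action $F^e$ on $H^j_\fm(R)$ dualizes to a Cartier-linear map $C^e\colon F^e_*E^{n-j}\to E^{n-j}$, induced by $F^e_*R\to R$ and $F^e_*S\cong\operatorname{Hom}_S(F^e_*S,S)$. The images $\operatorname{im}(C^e)$ form a descending chain of submodules. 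By Gabber's/Blickle's stabilization theorem for Cartier modules over $F$-finite rings, this chain stabilizes at a finitely generated module $M_j\subseteq E^{n-j}$. Matlis duality then identifies $M_j^\vee$ with the stable image $F^e(H^j_\fm(R))R$ for $e\gg0$ (equivalently, with $H^j_\fm(R)$ modulo its nilpotent part, using the HSL number).

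\textbf{Step 1: the $F$-depth side.} We have $H^j_\fm(R)$ nilpotent if and only if $M_j=0$. So $\fdp R\ge s$ if and only if $M_j=0$ for all $j<s$.

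\textbf{Step 2: localization.} Fix $\fp\in\Spec R$ with preimage $P\subset S$. Then $S_P$ is regular of dimension $n-\dim S/P$, and local duality over $R_\fp$ identifies $(E^{n-j})_P$ with the dual of $H^{j-\dim R/\fp}_{\fp R_\fp}(R_\fp)$. The key claim is that the Cartier structure localizes and that stabilization commutes with localization, so that $(M_j)_\fp$ is exactly the stable Cartier submodule for $R_\fp$ in cohomological degree $j-\dim R/\fp$. I expect this step to be the main obstacle. I would handle it as follows: images of $C^e$ commute with localization because localization is exact and commutes with $F_*$ for $F$-finite rings; and stabilization holds uniformly because the chain has already stabilized globally at some $e_0$. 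Using equidimensionality and catenarity (from excellence), $\dim R/\fp=d-\hgt\fp$. Hence $\fdp R_\fp\ge t-d+\hgt\fp$ if and only if $(M_j)_\fp=0$ for all $j<t$.

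\textbf{Step 3: the generalized side.} Unwinding the definition of $\gfdp_J(R)$ from \cite{CMM23}, $\gfdp_J(R)\ge t$ means that for every $j<t$ some power of $J$ kills $F^e(H^j_\fm(R))R$ for $e\gg0$. Since annihilators are preserved under Matlis duality, this says $J\subseteq\sqrt{\Ann M_j}$. Because $M_j$ is finitely generated, this is equivalent to $\operatorname{Supp}M_j\subseteq V(J)$, that is, $(M_j)_\fp=0$ for all $\fp\notin V(J)$.

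\textbf{Conclusion.} Combining Step 3 with Step 2 gives the equivalence for all $j<t$ simultaneously, which is the statement of the theorem.
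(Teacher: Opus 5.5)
Your argument is correct and is essentially the paper's proof. The paper rewrites $\gfdp_J(R)\ge t$ as $J\subset \fb_j(R)$ for $j<t$ and cites \cite[Lem. 3.9]{CMM23}, namely $\fb_j(R)R_\fp=\fb_{j-\dim R/\fp}(R_\fp)$; your Steps 1--2 prove exactly this localization via Cartier stabilization, since $\sqrt{\Ann M_j}=\fb_j(R)$. You should also note that $(M_j)_\fp=0$ automatically for $j<\dim R/\fp$, because $\operatorname{Ext}^{n-j}_{S_P}(R_\fp,S_P)=0$ when $n-j>\dim S_P$.

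One correction: the image of $C^e$ is Matlis dual to $H^j_\fm(R)/\ker F^e$, so $M_j^\vee\cong H^j_\fm(R)/0^F_j$, not the $R$-span $F^e(H^j_\fm(R))R$. These need not be isomorphic, although their annihilators have the same radical. With the correct identification, Step 3 follows directly from the definition $J^NH^j_\fm(R)\subset 0^F_j$.
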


In the spirit of the primary applications considered in the introduction, we consider an effect the generalized $F$-depth with respect to an ideal has on finiteness of ideals generated by filter regular sequences (see \Cref{sec:genfte} for the definition), which are slightly more general than partial systems of parameters. Specifically, we improve on a result \cite[Thm. 1.1]{HQ22} in the $F$-finite setting.

\begin{theorem}[\Cref{thm: gfdp_J and filter regular seqeunces}] Let $(R,\fm)$ be an $F$-finite local ring of dimension $d$ and $J \subset R$ an ideal. Suppose $t \leq \dim R$. Consider the set $$\fQ_{J,t} := \{ (x_1,\ldots,x_t) \subset J \mid x_1,\ldots,x_t \textrm{ is a filter regular sequence} \}.$$ If $\gfdp_J(R) \geq t$, the supremum $\sup\{\fte \fq \mid \fq \in \fQ_{J,t}\}$ is finite. \end{theorem}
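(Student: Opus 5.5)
We follow the strategy of \cite{HQ22}: control $\fte \fq$ by the nilpotency index of Frobenius on local cohomology supported at the partial ideals of the sequence. In the $F$-finite setting, $\gfdp_J(R)$ becomes a uniform nilpotence statement at a single level. Throughout, write $\fq_i=(x_1,\ldots,x_i)$ for a filter regular sequence $x_1,\ldots,x_t\in J$.

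The first step is to recall what $\gfdp_J(R)\ge t$ provides. By the definition from \cite{CMM23}, we expect that for each $j<t$ the module $H^j_\fm(R)$ is ``nilpotent up to $J$''. Concretely, $J^N F^{e}(H^j_\fm(R))=0$ for some $N,e$; equivalently, after localizing away from $V(J)$, Frobenius acts nilpotently. Since $R$ is $F$-finite, we can use the Hartshorne--Speiser--Lyubeznik theorem (see \Cref{sec:HSL}) on the finitely many relevant modules. Together with \Cref{thm:replacepuncspec} and noetherian induction on the finitely many strata, this should give a single exponent $e_0$ and a power $J^{N}$ such that $J^{N}\cdot F^{e_0}(\eta)=0$ for every $\eta$ that is nilpotent in the relevant $H^j$. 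The $F$-finite hypothesis is used exactly here to make $e_0$ and $N$ uniform.

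The second step passes from $H^j_\fm(R)$ to the modules $H^j_{\fq_i}(R)$ and to $R/\fq_i$ by induction on $i$. Filter regular sequences give the standard short exact sequences
$$0\to H^0_\fm(R/\fq_{i-1})\to R/\fq_{i-1}\xrightarrow{x_i}\cdots,$$
together with the isomorphisms $H^j_\fm(R/\fq_i)\cong H^{j+i}_{\fq_i}$-type relations. As in \cite{HQ19,HQ22}, these yield $H^{j}_{\fq_i}(R)$-to-$H^{j}_\fm(R)$ comparison maps whose kernels and cokernels are supported at $\fm$. We will show by induction that an element $x\in\fq^F$ satisfies $x^{p^{e}}\in\fq^{[p^e]}$ for $e\le e_0 t+C$, with $C$ independent of $\fq$. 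The inductive step uses the Katzman--Sharp argument: the class of $x$ in $H^t_{\fq}(R)$ is killed by Frobenius and hence is nilpotent. The obstruction to lifting this nilpotence back to the ideal membership $x^{p^e}\in\fq^{[p^e]}$ lies in lower local cohomology $H^{j}_{\fq_i}(R)$ with $j<t$, and step one bounds its nilpotency uniformly. A key observation is that since $x_i\in J$, the extra factor $J^N$ is absorbed: multiplying by powers of the $x_i$ is harmless in the Čech/Koszul bookkeeping, because $x_i^{p^e}\in \fq^{[p^e]}$.

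The main obstacle is the second step, specifically keeping the bound independent of $\fq$ despite the $J^N$ factor. The annihilator $J^N$ must be converted into a bounded shift of Frobenius exponents, using elements of $\fq\subset J$ rather than arbitrary elements of $J$. My first attempt is the trick from \cite{HQ22}: replace $\fq$ by $(x_1^{n},\ldots,x_t^{n})$ with a fixed $n$ depending only on $N$ and $t$. Then use that $\fte$ of $\fq$ is controlled by that of its powers through the colon identities $(\fq^{[n]}:x^{n-1})=\fq$ that hold for filter regular sequences up to $\fm$-torsion. The $\fm$-torsion is again uniformly handled by HSL applied to $H^0_\fm$.
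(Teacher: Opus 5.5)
\textbf{There is a genuine gap.} The idea that the $J$-factor is absorbed because each $x_i\in J$ is correct, but both steps that carry the proof are missing, and what you put in their place does not work.

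\textbf{Step 1 and Step 2.} In Step 1 the uniformity is in the wrong place. For $H^j_\fm(R)$ itself it is immediate: finitely many $j$, one $N$, one HSL number. Also, \Cref{thm:replacepuncspec} is neither needed nor available here, since $R$ is not assumed equidimensional. What must be uniform in $\fq\in\fQ_{J,t}$ is the relative HSL number $\HSL^\rho_0(R/\fq)$. The paper gets this as follows.
\begin{itemize}
\item It proves an annihilation statement $J^{2^in_0}H^j_\fm(R/\fq_i)\subset 0^\rho_j(R/\fq_i)$, with $n_0$ independent of $\fq$, by induction on $i$ along $H^j_\fm(R/\fq_{i-1})\to H^j_\fm(R/\fq_i)\to H^{j+1}_\fm(R/\fq_{i-1})$.
\item It applies this also to $\fq^{\fbp{p^e}}$, which again lies in $\fQ_{J,t}$.
\item For a fixed $e$ with $p^e\ge 2^tn_0$, multiplication by $x_i^{p^e}$ then sends $H^j_\fm(R/\fq_{i-1}^{\fbp{p^e}})$ into its relatively nilpotent part. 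Hence the preimage under $\alpha_e$ of relatively nilpotent classes is nilpotent, and the ladder yields $\HSL^\rho_0(R/\fq)\le e_1$.
\end{itemize}
Your Step 2 instead uses the Katzman--Sharp class of $x$ in $H^t_\fq(R)$. For $t<d$ this module is in general not artinian, so there is no HSL-type uniform nilpotency index for it. Your comparison claim also fails exactly in the degree you need: $H^t_\fm(R)\to H^t_\fq(R)$ is injective with image $H^0_\fm(H^t_\fq(R))$, and its cokernel is in general not supported at $\fm$. Separately, the relation $H^j_\fm(R/\fq_i)\cong H^{j+i}_{\fq_i}(R)$ you quote cannot hold: $H^n_{\fq_i}(R)=0$ for $n>i$, so it would force $H^j_\fm(R/\fq_i)=0$ for $j>0$.

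\textbf{Elements of $\fq^F$ outside $\fq:\fm^\infty$.} This is the more serious omission. Local cohomology at $\fm$ only detects $0^\rho_0(R/\fq)=(\fq^F\cap(\fq:\fm^\infty))/\fq$, and when $t<d$ there can be elements of $\fq^F$ outside $\fq:\fm^\infty$. The paper treats these by a separate dimension reduction.
\begin{itemize}
\item Take $x\in\fq^F$ with $s=\dim R/(\fq:x)>0$, and localize at a minimal prime $\fp\neq\fm$ of $(\fq:x)$ with $\dim R/\fp=s$.
\item There $x_1,\ldots,x_t$ is a regular sequence and $x/1\in 0^\rho_0(R_\fp/\fq R_\fp)$.
\item A uniform bound $h$ on $\HSL^\rho_0(R_\fp/\fq R_\fp)$ gives $x^{p^h}/1\in\fq^{\fbp{p^h}}R_\fp$, which forces $\dim R/(\fq^{\fbp{p^h}}:x^{p^h})<s$.
\item After at most $d-t$ rounds, $x^{p^{(d-t)h}}\in\fq^{\fbp{p^{(d-t)h}}}:\fm^\infty$. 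Only then does $e_1$ apply, giving $\fte\fq\le(d-t)h+e_1$.
\end{itemize}
Your closing trick does not replace this. Passing to $(x_1^n,\ldots,x_t^n)$ through colon identities only moves between $\fq$ and its limit closure, which is itself an $\fm$-torsion phenomenon. And ``HSL applied to $H^0_\fm$'' is circular: $H^0_\fm(R/\fq^{\fbp{p^e}})$ changes with $\fq$ and $e$, and a bound on its relative HSL number that is uniform in $\fq$ is exactly what has to be proved.
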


Finally, we add two new openness results. Specifically, \cite[Thm. 5.2]{KMPS23} shows the weakly $F$-nilpotent and the $F$-nilpotent loci are open in $F$-finite rings. This can be adapted to the locus of primes for which $\gfdp_J$ is bounded below, using the results from \cite{CMM23}.

\begin{theorem}[\Cref{thm:gfdpopen}]
    Let $(R,\fm)$ be an $F$-finite, equidimensional local ring of dimension $d$, $J\subset R$ an ideal, and $t \ge 0$. The locus \[\{\fp \in \Spec R \mid \gfdp_{JR_\fp}(R_\fp) \ge t \}\] is open.
\end{theorem}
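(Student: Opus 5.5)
The plan is to deduce openness from the pointwise characterization in \Cref{thm:replacepuncspec}, together with the openness of loci defined by lower bounds on the ordinary $F$-depth. Since $R$ is $F$-finite and equidimensional, every localization $R_\fp$ is again $F$-finite and equidimensional. Moreover, excellent equidimensional local rings are catenary and their localizations satisfy $\dim R_\fq = \dim R_\fp - \hgt(\fp R_\fp/\fq R_\fp)$ for $\fq \subseteq \fp$; I will assume this dimension formula is available, since $F$-finite rings are excellent and quotients of regular rings by Gabber. Applying \Cref{thm:replacepuncspec} to $R_\fp$ with the ideal $JR_\fp$ gives
\[
\gfdp_{JR_\fp}(R_\fp) \ge t \iff \fdp(R_\fq) \ge t - \dim R_\fp + \hgt \fq \ \text{ for all } \fq \subseteq \fp,\ \fq \not\supseteq J.
\]
By the dimension formula, $\dim R_\fp - \hgt\fq = \dim(R_\fp/\fq R_\fp)$. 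So the condition says that $\fdp(R_\fq) + \dim (R/\fq)_\fp \ge t$ for every $\fq \in \Spec R\setminus V(J)$ with $\fq\subseteq \fp$.

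Next I will rewrite this in a way that is visibly open. Let $B$ be the set of primes $\fq \notin V(J)$ that violate the inequality at some $\fp \supseteq \fq$. In the equidimensional catenary setting, $\dim(R/\fq)_\fp = \hgt \fp - \hgt\fq$ with $\hgt$ taken in $R$. The condition at $\fp$ therefore reads: for all $\fq \in V(\fq)$-predecessors of $\fp$ outside $V(J)$, $\hgt\fq - \fdp(R_\fq) \le \hgt\fp - t$. For each integer $k$, set
\[
Z_k = \{\fq \in \Spec R \setminus V(J) \mid \hgt \fq - \fdp(R_\fq) \ge k\}.
\]
I claim $Z_k$ is closed in $\Spec R\setminus V(J)$. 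This is the key input: the function $\fq \mapsto \hgt\fq - \fdp(R_\fq)$ is upper semicontinuous. That is the $F$-depth analogue of the semicontinuity of $\dim - \operatorname{depth}$. I expect it follows from the results of \cite{KMPS23} and \cite{CMM23} on localization of $F$-depth, namely $\fdp R_\fq + \dim R_\fp/\fq R_\fp \ge \fdp R_\fp$ type inequalities combined with constructibility. With this in hand, the bad locus is
\[
\bigcup_k \{\fp \mid \hgt\fp < t + k \text{ and } \fp \in \overline{Z_k}\},
\]
a finite union since $k$ is bounded by $d$. The set of $\fp$ containing a point of $Z_k$ is $\overline{Z_k}$, which is closed, and the set of $\fp$ with $\hgt \fp < t+k$ is not closed. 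So this step needs refinement: I would instead use the generic points $\fq_1,\dots,\fq_r$ of $\overline{Z_k}$ and show that the minimal violators suffice.

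Showing that the worst violation is attained at the generic points of $\overline{Z_k}$ is the main obstacle. I would first reduce to the case where, for $\fp \supseteq \fq_i$, the constraint coming from $\fq_i$ is the binding one among all $\fq\in Z_k$ below $\fp$: any $\fq \in Z_k$ contains some $\fq_i$, so $\hgt \fq \ge \hgt \fq_i$, and the violation $\hgt\fq - \fdp R_\fq - (\hgt\fp - t)$ should be controlled by the one at $\fq_i$ via the localization inequality. Then the bad locus becomes $\bigcup_{k,i} V(\fq_i)\cap\{\hgt\fp<t+k\}$. Each $V(\fq_i)$ is irreducible, and the height condition cuts out the primes of small height in it. The complement within $V(\fq_i)$ is then the set of $\fp \supseteq \fq_i$ with $\hgt(\fp/\fq_i) \ge t + k - \hgt\fq_i$.

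If this route becomes unwieldy, the fallback is to mimic directly the proof of \cite[Thm.~5.2]{KMPS23}. Choose a finite Frobenius-stable description of the nilpotent parts of the local cohomology via the $F$-finite structure, such as Gabber/Lyubeznik $F$-modules or the duals $\operatorname{Ext}$ modules with their Cartier maps. Then express $\gfdp_J\ge t$ as the vanishing of finitely many finitely generated modules localized at $\fp$, using \cite{CMM23}. Openness is then immediate, because supports of finitely generated modules are closed.
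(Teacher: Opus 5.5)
\textbf{Genuine gap.} The step you flag as ``the main obstacle'' cannot be overcome, because the statement is false as written for $t\ge 1$. Your own reformulation shows why. The constraint that a fixed $\fq\notin V(J)$ imposes at $\fp\supseteq\fq$ is violated exactly on $\{\fp\in V(\fq):\hgt\fp<t+\hgt\fq-\fdp(R_\fq)\}$. That set is stable under generization inside $V(\fq)$, not closed. So the pieces $V(\fq_i)\cap\{\hgt\fp<t+k\}$ of your bad set are never closed, and no choice of ``binding'' violators changes this.

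Here is a concrete counterexample. Let $R=\mathbb{F}_p[[x]]$, or any $F$-finite local domain of positive dimension, with $J=\fm$ (any nonzero $J$ works) and $t=1$.
\begin{itemize}
\item At $\fp=(0)$, $R_\fp=K$ is a field and $JR_\fp=K$. Since $H^0_{\fp R_\fp}(R_\fp)=K$ and $0^F_0=0$, we get $\gfdp_{JR_\fp}(R_\fp)=0<1$.
\item At $\fm$, $H^0_\fm(R)=0$, so $\gfdp_J(R)\ge 1$.
\end{itemize}
The locus is therefore $\Spec R\setminus\{(0)\}$, which for $\mathbb{F}_p[[x]]$ is just $\{\fm\}$. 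It is not open, since the only open subset of the spectrum of a local ring containing the closed point is all of $\Spec R$.

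Your fallback fails for the same reason. By \cite[Lem.~3.9]{CMM23} (for $R$ complete and equidimensional), $\fb_j(R_\fp)=\fb_{j+d-\hgt\fp}(R)R_\fp$. Hence ``$\gfdp_{JR_\fp}(R_\fp)\ge t$'' constrains $\fb_m(R)$ only for $d-\hgt\fp\le m<t+d-\hgt\fp$, a window that moves with $\fp$. It is not the vanishing at $\fp$ of a fixed finite family of finitely generated modules. The argument of \cite[Thm.~5.2]{KMPS23} works for the weakly $F$-nilpotent locus only because $\fdp(R_\fp)=\dim R_\fp$ is a codepth condition, which is invariant under this shift.

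The paper's own proof has the same defect. The ideal whose support it removes has indices depending on $\hgt\fp$, so it does not cut out a single closed set.

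What your method, and the paper's, actually proves is the normalized statement: $\{\fp:\gfdp_{JR_\fp}(R_\fp)\ge t-d+\hgt\fp\}$ is open. To see this, note that $\fb_m(R)R_\fp=R_\fp$ for $m<d-\hgt\fp$, since those indices correspond to negative-degree local cohomology of $R_\fp$. So this locus equals $\Spec R\setminus\bigcup_{m=0}^{t-1}V(\fb_m(R):_R J)$. This is the pointwise form of \Cref{thm:replacepuncspec}.
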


The second is slightly more nuanced but provides further applications to finiteness of Frobenius test exponents. In particular, in some situations, an open locus where $\fte R_\fp$ is uniformly bounded can be constructed based on a comparison along finite, purely inseparable ring extensions.

\begin{theorem}[\Cref{FTE:open}]
Suppose $\phi \colon R\rightarrow S$ is a finite ring extension of $F$-finite rings of prime characteristic $p>0$. Let $X_\phi$ be the set of primes of $\Spec R$ for which $\phi_\fp$ is a purely inseparable extension. There is an $e_0 \in \NN$ such that for all $\fp$ in $X_\phi$ such that $S_\fq$ is $F$-pure, $\fte I \le e_0$ for all ideals $I\subset R_\fp$. In particular, if the image of the $F$-pure locus of $S$ is also open in $\Spec R$, then there is an open set in $\Spec R$ on which the Frobenius test exponent of all ideals has a uniform upper bound.\end{theorem}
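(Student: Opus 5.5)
The idea is to pass from $R_\fp$ to $S_\fq$: use the Frobenius sandwich given by pure inseparability, then use $F$-purity of $S_\fq$ (where Frobenius closure is trivial), and finally bound the exponents uniformly over $X_\phi$ by a finiteness argument. Here $\fq$ denotes the (unique) prime of $S$ lying over $\fp$.

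\emph{Local comparison.} Fix $\fp \in X_\phi$. Pure inseparability of $\phi_\fp \colon R_\fp \to S_\fp$ means there is an $e_1$ with $s^{p^{e_1}} \in \phi(R_\fp)$ for all $s \in S_\fp$. In particular $\Spec S_\fp \to \Spec R_\fp$ is a homeomorphism, so $S_\fp = S_\fq$ is local. Let $I \subset R_\fp$ be an ideal and $x \in I^F$. Then $\phi(x) \in (IS_\fq)^F$. Since $S_\fq$ is $F$-pure, Frobenius is pure on $S_\fq$, so every ideal is Frobenius closed; hence $\phi(x) \in IS_\fq$. Now write $\phi(x) = \sum s_i \phi(a_i)$ with $a_i \in I$ and raise to the $p^{e_1}$-th power:
\[
\phi\bigl(x^{p^{e_1}}\bigr) = \sum s_i^{p^{e_1}} \phi\bigl(a_i^{p^{e_1}}\bigr), \qquad s_i^{p^{e_1}} = \phi(r_i) \text{ for some } r_i \in R_\fp .
\]
Thus $x^{p^{e_1}} \in F^{e_1}(I)R_\fp + \ker\phi_\fp$, since the right-hand side is the image of $\sum r_i a_i^{p^{e_1}} \in F^{e_1}(I)R_\fp$. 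Elements of $\ker \phi_\fp$ are nilpotent, because a purely inseparable extension has nilpotent kernel. So some power $F^{e_2}$ kills $\ker\phi_\fp$. Applying $F^{e_2}$ gives $x^{p^{e_1+e_2}} \in F^{e_1+e_2}(I)R_\fp$, hence $\fte I \le e_1+e_2$.

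\emph{Uniformity.} The main obstacle is choosing $e_1$ and $e_2$ independently of $\fp$.
\begin{itemize}
\item For $e_2$: the nilradical of $R$ is finitely generated, so $F^{e_2}(\sqrt{0})=0$ in $R$ for some fixed $e_2$. This persists after localization and kills $\ker\phi_\fp$.
\item For $e_1$: $S$ is a finite $R$-module, generated by $s_1,\dots,s_n$. At $\fp \in X_\phi$ each $s_j/1$ has some power $s_j^{p^{e}}$ lying in $\phi(R)_\fp$.
\end{itemize}
If the exponent varies with $\fp$, I would instead define $X_\phi$ pointwise and use quasi-compactness. Consider the conductor-type ideals $\mathfrak c_e = \Ann_R\bigl(S/(\phi(R)+S^{p^e}\text{-span})\bigr)$, or more concretely the annihilators of the classes of $s_j^{p^e}$ in $S/\phi(R)$. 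These form an increasing chain in the noetherian ring $R$, so they stabilize at some $e_1$. A prime $\fp$ lies in $X_\phi$ iff it avoids some $\mathfrak c_e$, and therefore iff it avoids $\mathfrak c_{e_1}$. That gives a single $e_1$ valid on all of $X_\phi$, and shows $X_\phi = \Spec R\setminus V(\mathfrak c_{e_1})$ is open. Since $S$ is a finite module, the same $e_1$ then works for all of $S_\fp$: raising to $p^{e_1}$ is additive and $R$-linear up to the Frobenius twist, i.e. $(\sum r_j s_j)^{p^{e_1}} = \sum r_j^{p^{e_1}} s_j^{p^{e_1}}$. Set $e_0 = e_1+e_2$.

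\emph{Final claim.} Suppose the image $U$ of the $F$-pure locus of $S$ is open in $\Spec R$. The $F$-pure locus of $S$ is the set of primes $\fq$ with $S_\fq$ $F$-pure, and for $\fp \in X_\phi$ we have $\fp \in U$ iff $S_\fq$ is $F$-pure, since $\fq$ is the unique prime over $\fp$. So $X_\phi \cap U$ is open, being an intersection of two open sets, and on it every ideal of every $R_\fp$ has $\fte \le e_0$ by the previous steps.
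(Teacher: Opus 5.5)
Your proof is correct and follows the paper's approach: find one exponent $e$ with $S_\fp^{p^e}\subseteq R_\fp$ for every $\fp\in X_\phi$, then apply the $F$-purity sandwich, which the paper quotes from \cite[Rmk. 4.5]{DMP24} rather than proving. The only difference in getting $e$ is that the paper takes the HSL number of $S/R$ and uses \Cref{rmk: hsl drops when localizing}, whereas you stabilize a chain of annihilator ideals; your $e_2$ is unnecessary, since $\phi$ is injective and localization is exact.

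That chain needs a small repair.
\begin{itemize}
\item Your first formula for $\mathfrak c_e$ annihilates the wrong module: for $e=0$ it is $\Ann_R(S/S)=R$, which every prime avoids.
\item The individual ideals $\Ann_R(\overline{s_j^{p^e}})$ need not increase with $e$.
\item The intersection $\mathfrak c_e=\bigcap_j\Ann_R(\overline{s_j^{p^e}})$ is the right object. Because $(\sum r_js_j)^{p^e}=\sum r_j^{p^e}s_j^{p^e}$, it is the annihilator of the $R$-span of $\overline{F}^e(S/R)$. That span shrinks as $e$ grows, so $\mathfrak c_e$ genuinely increases and your stabilization argument goes through.
\end{itemize}
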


{\bf Acknowledgments:} We thank William D. Taylor for comments and corrections from an advance reading of the survey.

\section{Preliminaries}

Throughout, all rings will be noetherian of prime characteristic $p>0$, and for simplicity of exposition, we will also assume all rings are $F$-finite (that is, the Frobenius map is a finite ring map) unless otherwise stated. We write $\lambda_R(M)$ for the length of an $R$-module $M$. When $R$ is local, we reserve $\fq$ for a \textit{parameter ideal} of $R$, that is, an ideal generated by a part of a system of parameters, and say $\fq$ is a \textit{full} parameter ideal if its height is equal to the dimension of the ring. When clarity is needed we will refer to a \textit{partial} parameter ideal for a parameter ideal. The set $\Min R$ (respectively $\operatorname{Ass} R$) denotes the minimal (resp. associated) primes of $R$ and $R^\circ := R\setminus \bigcup_{\fp \in \Min R} \fp.$ Given an ideal $J\subset R$, we write $V(J)$ for the set of prime ideals of $R$ which contain $J$.

\subsection{ Frobenius and ideal closures} Given an $R$-module $M$, we write $F^e_*(M)$ for the restriction of scalars along the iterated Frobenius map $F^e$. For an ideal $I$ in $R$, $I^{\fbp{p^e}}$ denotes the \textit{$e$th Frobenius bracket power of $I$} i.e., the ideal generated by $F^e(I)$ in $R$.

\begin{definition}\label{dff: tight and frobenius closure}
Fix an ideal $I\subset R$. The \textbf{Frobenius closure} of $I$, written $I^F$, is the ideal of $R$ given by \[I^F := \left\lbrace x \in R \left| x^{p^e} \in I^{\fbp{p^e}} \text{ for some } e \gg 0 \right. \right\rbrace\] and the \textbf{tight closure} of $I$, written $I^*$, is the ideal of $R$ given by \[I^* := \left\lbrace x \in R \left| \text{ there is a } c \in R^\circ \text{ such that } cx^{p^e} \in I^{\fbp{p^e}} \text{ for all } e \gg 0 \right. \right\rbrace.\] 
\end{definition}

\noindent It is easy to see that $I\subset I^F \subset I^* \subset \overline{I}$ where $\overline{I}$ denotes the integral closure of $I$. However, if $R$ is a regular ring, $I^*=I$ for all ideals $I$. We will focus primarily on the Frobenius closure of ideals. 

\begin{definition}\label{dff: Fte}
Note that, as $R$ is noetherian, there is an $e$ so that $(I^F)^{\fbp{p^e}} = I^{\fbp{p^e}}$. We call the smallest such $e$ the \textbf{Frobenius test exponent} of $I$, written $\fte I$. 
\end{definition}

When attempting to compute $I^F$ for a specific ring $R$ and ideal $I$, knowing $e_0=\fte I$ is important, since $I^F$ is the preimage under $F^{e_0}$ of $I^{\fbp{p^{e_0}}}$. So, having explicit upper bounds on $\fte I$ is valuable. 

Unfortunately, the set $\{ \fte I \mid I \subset R\}$ can be unbounded, even in quite nice rings \cite{Bre06}. However, in a variety of contexts, certain classes of ideals admit uniform upper bounds on their Frobenius test exponents. Due to relationships with local cohomology modules we will explicate soon, parameter ideals form a natural and important example of a class which enjoys uniformly upper bounded Frobenius test exponents across a variety of different singularity types for rings. Thus, if $(R,\fm)$ is a local ring, we set $$\fte R := \sup \{ \fte \fq \mid \fq \textrm{ is a full parameter ideal} \},$$ which is called the {\bf Frobenius test exponent of $R$}. 

\begin{remark}\label{rmk: full fte bounds partial fte}
By a similar technique to \cite[Lem. 3.1]{Ma15}, $\fte R$ also bounds the Frobenius test exponent for ideals generated by partial systems of parameters. In particular, consider $(R,\fm)$ a local ring of dimension $\dim R = d$ and $\underline{x}=x_1,\ldots,x_t$ be a part of a system of parameters. Set $e := \fte R$ and assume this is finite. Extend $\underline{x}$ to a full system of parameters $x_1,\ldots,x_t,y_{t+1},\ldots,y_d$. For $n \in \NN$, set $\underline{y}_n = x_1,\ldots,x_t,y_{t+1}^n,\ldots,y_d^n$. Notably, $(\underline{x})^F\subset (\underline{y}_n)^F$ for each $n$. For $z \in (\underline{x})^F$, we have $z^{p^e} \in (\underline{y}_n)^{\fbp{p^e}}$ for all $n$, hence \[z^{p^e} \in \bigcap_n \left(x_1^{p^e},\ldots,x_t^{p^e},\left(y_{t+1}^{p^e}\right)^n,\ldots,\left(y_d^{p^e}\right)^n\right) = (\underline{x})^{\fbp{p^e}},\] where the last equality follows from Krull's Intersection Theorem. Hence we have the desired estimate $\fte (\underline{x}) \le \fte R$.
\end{remark}

\subsection{The Frobenius action on local cohomology}\label{subsec: frob action} A primary approach to address bounds on $\fte R$ relies on local cohomology, a subject with which we assume the reader is familiar. For a local ring $(R,\fm)$ of dimension $d$, the Frobenius map $F:R\rightarrow R$ induces a map $F : H^j_\fm(R) \rightarrow H^j_\fm(R)$, which is called the Frobenius action on local cohomology. Note that $F$ is not $R$-linear but $p$-linear, i.e., for all $r \in R$ and $\xi \in H^j_\fm(R)$, $F(r\xi)=r^pF(\xi)$. We are interested in the elements which vanish under this action, so consider \[0^F_{H^j_\fm(R)} := \{ \eta \in H^j_\fm(R) \left| F^e(\eta) = 0 \textrm{ for some } e \right. \},\] which we call the \textbf{Frobenius orbit closure of $0$ in $H^j_\fm(R)$}.\footnote{There is a notion of Frobenius closure for submodules that is generally distinct from this one, however they agree in the case of the top local cohomology module $H^d_\fm(R)$ for a local ring $(R,\fm)$. We will not need the general notion here, so we focus on the orbit closure.} 

We will also need to address the \textbf{tight closure of $0$ in $H^d_\fm(R)$}, denoted $0^*_{H^d_\fm(R)}$, and is defined\footnote{There is a more involved definition of tight closure of submodules in general, but this works in our context. See \cite[Rmk. 2.6]{PQ19} for further details.} \[
0^*_{H^d_\fm(R)} = \left\lbrace \eta \in H^d_\fm(R) \mid \textrm{there is a } c \in R^\circ \textrm{ such that } cF^e(\eta) = 0 \textrm { for all } e \gg 0 \right\rbrace.
\] From the definition, it is clear $0^F_{H^d_\fm(R)} \subset 0^*_{H^d_\fm(R)}.$ It will be common to simplify this notation to $0^F_j(R)$. When the setting is fixed or clear from context, we may write $0_j^F$ for $0^F_{H^j_\fm(R)}$ or $0^*_d$ for $0^*_{H^d_\fm(R)}$.

\subsection{Hartshorne-Speiser-Lyubeznik numbers}\label{sec:HSL}

As for Frobenius closure, one might hope for a uniform exponent $e_0$ of Frobenius which computes $0^F_{H^j_\fm(R)}$ as $\ker(F^{e_0}:H^j_\fm(R)\rightarrow H^j_\fm(R)$). The existence of such an exponent was first shown in the algebro-geometric setting by Hartshorne and Speiser (\cite[Prop. 1.11]{HS77}) and later by Lyubeznik \cite[Prop. 4.4]{Lyu97} and Sharp \cite{Sha07}, such a $e_0$ exists for any artinian module with a Frobenius action. 

\begin{theorem}\label{thm:HSL}[Hartshorne-Speiser-Lyubeznik] Let $(R,\fm)$ be a local ring. For each $0 \leq j \leq \dim R$, there exists an $e_j \in \mathbb{N}$ such that \[0^F_{H^j_\fm(R)} = \ker \left(F^{e_j}:H^j_\fm(R) \rightarrow H^j_\fm(R)\right).\]  
\end{theorem}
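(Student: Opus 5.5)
The natural approach is to reduce the statement to the Hartshorne--Speiser--Lyubeznik theorem for artinian modules carrying a Frobenius action, as in \cite{Lyu97,Sha07}. Fix $j$ and set $H = H^j_\fm(R)$. This is an artinian $R$-module with a $p$-linear map $F$. Consider the ascending chain of subsets
\[
\ker F \subset \ker F^2 \subset \ker F^3 \subset \cdots,
\]
whose union is exactly $0^F_{H^j_\fm(R)}$. Each $\ker F^e$ is an $R$-submodule of $H$, since $F^e(r\eta)=r^{p^e}F^e(\eta)$. So the statement is equivalent to this chain stabilizing. Since $H$ is artinian, not noetherian, we cannot conclude directly, and we have to dualize or pass to a noetherian setting.

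The plan is as follows. Since $R$ is $F$-finite (our standing assumption), pass to the completion; this does not change $H$ or the action of $F$. Then write $\widehat R$ as a quotient of a regular complete local ring $A$, which is possible by Cohen's structure theorem. Using Matlis duality over $A$, a $p$-linear map on the artinian module $H$ corresponds to an $A$-linear map
\[
F_*\!\text{-dual}: \; D(H) \to F_*D(H), \qquad \text{equivalently} \qquad F^*D(H)\to D(H)
\]
in Lyubeznik's language of $F$-finite modules and generators. Here $D(H)$ is a finitely generated $A$-module.

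Rather than chase the dual kernels directly, I would follow Sharp's argument \cite{Sha07}, which avoids heavy machinery. Consider $N = 0^F_H$. This is an $F$-stable submodule on which $F$ is nilpotent pointwise. The key steps, in order, are:
\begin{enumerate}
\item Show that the submodule $\operatorname{soc}$-type filtration can be controlled. Let $N_e = \ker F^e$. It suffices to show that $N$ has finite length modulo some $N_e$, or to reduce to a finite-length situation.
\item Use that $N$ is artinian and that $F$ restricted to $N$ is locally nilpotent. The skew polynomial ring $R[F]$ acts on $N$. Sharp shows that, for an artinian $R[F]$-module, the $F$-torsion submodule is annihilated by a single $F^{e}$, by passing to the Matlis dual, which is noetherian over $\widehat R$.
\item Conclude that the dual chain of cokernels $D(N)\twoheadrightarrow D(N_e)$ corresponds to a descending chain of images $\operatorname{im}(F^e)^\vee$ in the noetherian module $D(N)$. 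One then proves stabilization using the ascending chain of kernels of the dual maps $\ker\big((F^e)^\vee\big)\subset D(N)$, and the noetherian property gives a uniform $e_j$.
\end{enumerate}

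The main obstacle is step 3, specifically turning the $p$-linear $F$ into a genuinely linear map whose iterates have kernels forming an ascending chain of \emph{submodules} of a fixed noetherian module. Under Matlis duality, the iterate $F^e$ dualizes to a map landing in $F^e_*D(N)$, a different module for each $e$. I would handle this by using $F$-finiteness, so that $F^e_*$ preserves finite generation, and by identifying $\ker F^e$ with the annihilator of the image of $F^{e*}D(N)\to D(N)$ (Lyubeznik's root construction). The increasing kernels then correspond to decreasing images inside the single noetherian module $D(N)$. Descending chains need not stabilize there, so I would instead use the associated increasing chain of their kernels, or Lyubeznik's result that $F$-finite modules have finite length in the category of $F$-modules. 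That finite-length result is where I expect the real work to lie.

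Finally, take $e_j$ to be the stabilization index of the chain $\ker F \subset \ker F^2 \subset \cdots$. Each $e_j$ is finite, and there are finitely many $j$ with $0 \le j \le \dim R$, so we can also take the maximum over $j$ if a single uniform exponent is desired.
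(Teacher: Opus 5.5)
The paper gives no proof of this statement. It only observes that $H^j_\fm(R)$ is an artinian module with a Frobenius action and cites the Hartshorne--Speiser--Lyubeznik theorem for such modules \cite{HS77,Lyu97,Sha07}. Your first paragraph is exactly that reduction, and stopping there with the citation would match the paper. The self-contained argument you then outline, however, has a genuine gap, located exactly where you say the ``real work'' lies. Step~2 is circular: the result you attribute to Sharp (the $F$-torsion of an artinian $R[F]$-module is killed by a single $F^e$) is the theorem itself. Step~1 only restates the goal, since $N/N_e$ having finite length for some $e$ is equivalent to the chain $N_e$ stabilizing.

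In Step~3, the chain that noetherianity controls is the wrong one.

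\emph{Lyubeznik's orientation.} Work over a complete regular $A$ surjecting onto $\widehat R$. Here the arrow goes $D(N)\to F^{e*}D(N)$, not $F^{e*}D(N)\to D(N)$. It is the dual $\beta_e$ of $\theta_e\colon F^{e*}N\to N$, $a\otimes\eta\mapsto aF^e(\eta)$. The chain $\ker\beta_e=\Ann_{D(N)}\bigl(RF^e(N)\bigr)$ is ascending and does stabilize. But it is dual to the descending chain of $R$-spans $RF^e(N)$, whose stabilization is automatic because $N$ is artinian. It says nothing about $\ker F^e$.

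\emph{Cartier orientation.} Using $F$-finiteness, $D(F^e_*N)\cong F^e_*D(N)$, and one gets $\Ann_{D(N)}(\ker F^e)=\kappa^e\bigl(F^e_*D(N)\bigr)$. This is a descending chain in a noetherian module, and by Matlis duality its stabilization is equivalent to what you are proving. The kernels $\ker\kappa^e$ sit in the different modules $F^e_*D(N)$, so there is no ``associated increasing chain'' to use. You also give no argument deriving stabilization from Lyubeznik's finite-length theorem.

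The missing step, stated concretely: let $L$ be the stable value of $RF^e(N)$, so $L$ is artinian, $RF(L)=L$, and $F$ is locally nilpotent on $L$. You must prove $L=0$, which is equivalent to $F^e(N)=0$ for some $e$. That is the actual content of the theorem, and your outline neither proves it nor reduces it to anything easier. Either supply that argument (as in Hartshorne--Speiser, Lyubeznik, Sharp, or Gabber's stabilization lemma for Cartier modules) or simply cite it, as the paper does.
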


Due to the theorem above, we refer to 
\[\inf \left\{ e \in \mathbb{N} \left| 0^F_{H^j_\fm(R)} = \ker(F^e:H^j_\fm(R)\rightarrow H^j_\fm(R)) \right. \right\} \] as the Hartshorne-Speiser-Lyubeznik (HSL) number of $H^j_\fm(R)$. For notational convenience, we set $\HSL_j(R) := \HSL(H_\fm^j(R))$. Set also $$\HSL(R) := \max \{ \HSL_j(R) \mid 0\le j \le d \}.$$

The connection between $\HSL(R)$ and $\fte R$ was first explicated by Katzman and Sharp, who showed in \cite{KS06} for a Cohen-Macaulay local ring $R$, $\fte R = \HSL(R)$. See \Cref{sec:genfte} for a history of bounding $\fte(R)$ by HSL numbers.

\subsection{\texorpdfstring{$F$}{F}-nilpotent rings and their generalizations}\label{sec:Fnil} Outside of the Cohen-Macaulay setting, results on bounding $\fte(R)$ have typically been attained by replacing the vanishing of the lower local cohomology modules with some degree of nilpotence under Frobenius.  

\begin{definition}
Fix $(R,\fm)$ a local ring of dimension $d$. We say a local cohomology module $H_\fm^j(R)$ is {\bf nilpotent} if $H_\fm^j(R) = 0^F_j$. Moreover, $R$ is said to be {\bf weakly $F$-nilpotent} provided $H_\fm^j(R)$ is nilpotent for all $j < d$, and $R$ is {\bf $F$-nilpotent} if it is weakly $F$-nilpotent and $0^F_{d} = 0^*_{d}$.
\end{definition}

$F$-nilpotent rings are those for which the Frobenius action on local cohomology is as nilpotent as possible, as $0^F_d \subset 0^*_d$ and $H^{\dim R}_\fm(R)$ never vanishes under Frobenius, see e.g. \cite[Lem. 4.2]{Lyu06}.

\

\noindent {\bf Nilpotence of Frobenius actions:} The Frobenius action on local cohomology is an example of a more general theory of Frobenius actions on modules. An $R$-module $M$ together with a $p$-linear map $\rho: M\rightarrow M$, i.e. $\rho$ is additive and $\rho(rm) = r^p\rho(m)$ for all $r \in R$ and $m \in M$ is a module $M$ with {\bf Frobenius action $\rho$}. We write $0^\rho_M$ for $\cup_e \ker(\rho^e:M\rightarrow M)$ and say $M$ is {\bf nilpotent under $\rho$} if there is an $e_0 \in \mathbb{N}$ such that $M = \ker(\rho^{e_0}:M\rightarrow M)$. As for $H^j_\fm(R)$, we call the least $e$ such that $0^\rho_M = \ker(\rho^e:M\rightarrow M)$ the HSL number of $\rho$, which is finite if $M$ is finitely generated, or if $M$ is artinian by \Cref{thm:HSL}.

\begin{remark}\label{rmk: hsl drops when localizing}
    Notice that if $M$ is a finitely generated $R$-module with Frobenius action $\rho$, for any multiplicative set $S\subset R$, we have that $S^{-1}M$ has a natural Frobenius action, which is defined by $\rho_\fp(m/t) = \rho(m)/t^p$. Furthermore, if $e_0$ is the HSL number of $M$, the HSL number of $S^{-1}M$ is no more than $e_0$. To see this, note if $m/t$ is a nilpotent element of $S^{-1}M$, then $\rho^e_\fp(m/t) = \rho^e(m)/t^{p^e} = 0$ so there is an $s \in S$ such that $s\rho^e(m) = 0$ in $M$. Hence, $s^{p^e}\rho^e(m) = \rho^e(sm) = 0$, and thus $\rho^{e_0}(sm) = 0$. We have then $0=\rho^{e_0}_\fp(sm/t) = s^{p^{e_0}}\rho^{e_0}(m/t)$ so $\rho^{e_0}(m/t) =0$.
\end{remark}

Two important examples of modules with Frobenius actions we will discuss later in the survey include $JH^j_\fm(R)$ for $J$ an ideal of a local ring $R$, whose Frobenius action is given by restriction of the Frobenius action $F$ on $H^j_\fm(R)$, and the cokernel of a ring extension $\phi:R\rightarrow S$, where the Frobenius action on $S/\phi(R)$ is given by $\overline{F}(s+\phi(R)) = s^p + \phi(R)$. 

\

\noindent {\bf Relative Frobenius maps:} Another important class of Frobenius-like maps arises as follows. Fix a local ring $(R,\fm)$. For an ideal $I$ in $R$ the iterated Frobenius map $F^e : R/I\rightarrow R/I$ factors via the following commutative diagram. \begin{center}
\begin{tikzcd}
R/I \arrow{rr}{F^e} \arrow[swap]{dr}{f^e} & \, & R/I \\ 
\, & R/I^{\fbp{p^e}} \arrow[swap]{ur}{\pi_e}  
\end{tikzcd}
\end{center} The map $\pi_e$ is the usual projection map and $f^e :  R/I\rightarrow R/I^{\fbp{p^e}}$ is defined by $f^e(r+I) = r^{p^e}+I^{\fbp{p^e}}$, which is called the {\bf relative Frobenius map}. This map is related to Frobenius closure of $I$ since $I^F/I = \bigcup \ker (f_e:R/I\rightarrow R/I^{\fbp{p^e}})$. Just as $F:R\rightarrow R$ induces a map on the local cohomology modules $H^j_\fm(R)$, we also have that the relative Frobenius map induces a $p^e$-linear map on the local cohomology modules of $R/I$, which we call the \textbf{relative Frobenius action on local cohomology}. \[
\rho^e : H^j_\fm\left(R/I\right)\rightarrow H^j_\fm\left(R/I^{\fbp{p^e}}\right)
\] Furthermore, by applying $H^j_\fm(R)$ to the diagram above, we also factor the Frobenius action $F$ on $H^j_\fm(R)$ via the relative Frobenius action.  
\begin{center}
\begin{tikzcd}
H_\fm^j(R/I) \arrow{rr}{F^e} \arrow[swap]{dr}{\rho^e} & \, & H_\fm^j\left(R/I\right) \\ 
\, & H_\fm^j\left(R/I^{\fbp{p^e}}\right) \arrow[swap]{ur}{\pi_e}  
\end{tikzcd}
\end{center} 

As before, we consider $0^\rho_{H^j_\fm(R/I)} = \cup_e\ker(\rho^e)$. We follow similar notational simplifiations $0^\rho_j(R/I)$ or just $0^\rho_j$ if the setting is clear. If there is an $e$ such that $0^\rho_{H^j_\fm(R/I)} = \ker(\rho^e)$, we say the least such $e$ is the \textbf{relative HSL number of $H^j_\fm(R/I)$}, which we write $\HSL^j_\rho(R/I)$.

Finally, modules with Frobenius actions, nilpotence, and other considerations in this section have natural analogues in the graded setting. We do not consider the graded case in this survey, but discuss it carefully in \cite{MM24}. 

\begin{example} We review a number of examples of weakly $F$-nilpotent and $F$-nilpotent rings. For simplicity, in the list that follows, $k$ will denote a perfect field of prime characteristic $p>2$.
\begin{itemize}
\item Any Cohen-Macaulay ring is weakly $F$-nilpotent, and any $F$-rational ring is $F$-nilpotent.
\item The hypersurface $k[[x,y,z]]/(x^4 + y^4 - z^4)$ is $F$-nilpotent when $p \equiv 3 \bmod 4$ as shown by Blickle in \cite[Ex. 5.28]{Bli04}. More generally, each hypersurface $k[[x_0,\ldots,x_n]]/(x_0^d + \ldots + x_{n-1}^d - x_n^d)$ with $p \equiv -1 \bmod d$, $n \geq 2$ is $F$-nilpotent \cite[Lem. 4.19]{MM24}. 
\item Direct summands of (weakly) $F$-nilpotent rings are (weakly) $F$-nilpotent. Thus, any Veronese subring of a weakly $F$-nilpotent graded ring remains weakly $F$-nilpotent. 
\item The ring $k[x^4,x^3y,xy^3,y^4]$ is $F$-nilpotent, as are most \textit{pinched Veronese} rings (see \cite[Thm. B]{MP23}) formed by removing an algebra generator from the $d$th Veronese subring of a polynomial ring. This example shows that $F$-nilpotent rings are not necessarily normal.
\item Every numerical semigroup ring $k[x^{n_1},\ldots,x^{n_t}]$ is $F$-nilpotent by \cite[Ex. 3.8]{MS25} and \cite[Cor. 4.4]{MS25}. 
\item The hypersurface $R=k[[x,y,z]]/(x^2+y^3+z^7+xyz)$ is normal and not $F$-nilpotent \cite[Rmk. 3.15]{DMP24} and \cite[Ex. 2.7]{ST17}, though the cusp $k[[x,y]]/(x^2+y^3)=R/zR$ is $F$-nilpotent. Thus, as noted by Srinivas-Takagi, $F$-nilpotence does not deform (see the following remark) since $z \in R$ is a regular element. 
\end{itemize}
\end{example}

\begin{remark}
A property $\mathcal{P}$ of local rings is said to \textit{deform} if, whenever $R$ is a local ring such that $R/zR$ has the property $\mathcal{P}$ for some regular element $z$ in $R$, then $R$ also has the property $\mathcal{P}$. The deformation question for $F$-singularities is an interesting and storied problem. It is well-known, \cite{HH94}, that $F$-rationality deforms, and whether $F$-injectivity deforms remains  difficult and still largely open question (\cite{HMS14},\cite{DSM22}). Weak $F$-nilpotence deforms in certain contexts (see \cite[Sec. 5]{CMM23} for some sufficient conditions), and although $F$-nilpotence does not deform as noted above, \cite[Thm. B]{PQ19} connects $F$-nilpotence of $R$ with a relative notion of $F$-nilpotence for $R/xR$ as $x$ ranges over all \textit{filter regular} elements of $R$. Filter regular sequences are reviewed in Section~\ref{sec:genfte}.
\end{remark} Under our standard assumptions (e.g. $F$-finite), \cite[Prop. 2.8]{PQ19} guarantees that the local ring $(R,\fm)$ is $F$-nilpotent if and only if the reduction $R/\sqrt{0}$ is $F$-nilpotent and if and only if the $\fm$-adic completion of $R$ is $F$-nilpotent. 

\

\noindent {\bf Generalized $F$-nilpotence:} In the sense that nilpotence under Frobenius is similar to vanishing, we can view weakly $F$-nilpotent rings as similar to Cohen-Macaulay rings, where vanishing of local cohomology is replaced with nilpotence of the local cohomology under $F$. Another, more classical, route to generalize Cohen-Macaulayness is by replacing vanishing with finite length. Recall a local ring $(R,\fm)$ is \textit{generalized Cohen-Macaulay} if the local cohomology modules $H^j_\fm(R)$ are finite length for all $0 \le j < \dim R$. When $R$ has a dualizing complex (e.g. $R$ is $F$-finite), generalized Cohen-Macaulayness is equivalent to $R$ being Cohen-Macaulay on the punctured spectrum. Further, generalized Cohen-Macaulay rings were shown to have finite Frobenius test exponent in \cite{HKSY06} shortly after Katzman-Sharp's proof for Cohen-Macaulay rings. 

In \cite[Main Thm]{Quy19}, Quy proved weakly $F$-nilpotent local rings have a finite Frobenius text exponent, and he showed a similar proof also works for   generalized Cohen-Macaulay rings. Extending these results, in \cite{Mad19}, the first author combined the generalized Cohen-Macaulay and weakly $F$-nilpotent hypotheses.

\begin{definition} A local ring $(R,\fm)$ is {\bf generalized weakly $F$-nilpotent} provided each $H_\fm^i(R)/0_i^F$ is finite length for all $i < \dim R$. 
\end{definition}

Just as Lyubeznik observed that the top local cohomology module $H^{\dim R}_{\fm}(R)$ of a local ring is not nilpotent under the Frobenius action, the top dimensional quotient $H^{\dim R}_\fm(R)/0^F_{\dim R}$ cannot be finite length, see \cite[Lem. 3.4.(c)]{MM24}. The main result of \cite{Mad19} shows that generalized weakly $F$-nilpotent local rings also enjoy finite Frobenius test exponents, which we discuss further in \Cref{sec:genfte}. For excellent local rings, generalized weakly $F$-nilpotent is equivalent to weakly $F$-nilpotent on the punctured spectrum, see \cite[Prop. 4.6]{KMPS23}.

\begin{example}  We review some examples of generalized weakly $F$-nilpotent rings. For simplicity, in the list that follows, $k$ will denote a perfect field of prime characteristic $p>2$.
\begin{itemize}
\item Any generalized Cohen-Macaulay or weakly $F$-nilpotent local ring is generalized weakly $F$-nilpotent, just as Cohen-Macualay rings are weakly $F$-nilpotent. \begin{center}
\begin{tikzpicture}
\node[scale=1] at (0,0) {
\begin{tikzcd}
\text{Cohen-Macaulay} \arrow[Rightarrow]{r} \arrow[Rightarrow]{d} & \text{gen. Cohen-Macaulay}  \arrow[Rightarrow]{d}  \\
\text{weakly {\em F}-nilpotent} \arrow[Rightarrow]{r} & \text{gen. weakly {\em F}-nilpotent}
\end{tikzcd}};
\end{tikzpicture}
\end{center} 
\item  There is a way \cite[Ex. 4.8]{KMPS23} to construct rings $R$ which are generalized weakly $F$-nilpotent but are neither weakly $F$-nilpotent nor generalized Cohen-Macaulay. 
\item The Segre product $k[x,y,z]/(x^4+y^4-z^4) \# k[u,v]$, which is the coordinate ring of a Segre product of $\mathbf{P}^1$ with a Fermat curve, is shown in \cite[Ex. 5.6]{MM24} to be generalized weakly $F$-nilpotent in general, and weakly $F$-nilpotent when $p \equiv 3 \bmod 4$. 
\end{itemize}
\end{example}

\section{\texorpdfstring{$F$}{F}-depth, generalizations, and numerical aspects}

In this section, we discuss notions of depth-like invariants which measure nilpotence under Frobenius. The first we consider, the $F$-depth of a local ring, was introduced by Lyubeznik in \cite{Lyu06} and is likely more familiar to the audience. This notion helps to explicate the analogy that weakly $F$-nilpotent singularities are to be considered as generalizations of Cohen-Macaulay singularities. Inspired by the generalized depth of Huckaba and Marley introduced in \cite{HM94}, the second depth-like invariant we consider is the generalized $F$-depth with respect to an ideal introduced in \cite{CMM23}. Both of these notions of generalized depth incorporate an auxiliary ideal which controls the locus of failure of Cohen-Macaulayness or weak $F$-nilpotence respectively.

\subsection{\texorpdfstring{$F$}{F}-depth} We start with Lyubeznik's $F$-depth and its basic properties. Lyubeznik introduced $F$-depth to study a problem of Grothendeick about \textit{cohomological dimension} of ideals.

\begin{definition}
Let $(R,\fm)$ be a local ring of prime characteristic $p>0$. The \textbf{$F$-depth of $R$}, denoted $\fdp R$, is the smallest index $j$ for which $H^j_\fm(R)$ is not nilpotent, i.e., the first index $j$ for which $0^F_j \neq H^j_\fm(R)$.
\end{definition}

As noted before, Lyubeznik showed that $\fdp R \le \dim R$ in \cite{Lyu06} for a local ring $(R,\fm)$. Furthermore, $\fdp R = \dim R$ if and only if $R$ is weakly $F$-nilpotent. We also have $\fdp R > 0$ if $\dim R>0$ since $H^0_\fm(R)$ is inside $\sqrt{0}$ which is nilpotent under Frobenius. Hence, if $R$ is a local ring of positive dimension, $0 < \fdp R \le \dim R$. 

The main result of \cite{Lyu06} states that if $(R,\fm)$ is a regular local ring of prime characteristic $p>0$ and dimension $d$, and $I$ is an ideal in $R$, then $H^{d-i}_I(R) = 0$ if and only if $H^i_\fm(R/I)$ is nilpotent under Frobenius. Correspondingly, the cohomological dimension of $I$ is at most $r$ if and only if $\fdp(R/I) \ge d-r$.

\subsection{Generalized \texorpdfstring{$F$}{F}-depth}\label{sec:genfdepth} Recall, from the introduction, one of our aims is to review the literature on nilpotent singularities through the lens of depth-like invariants. Our central focus in this subsection is a generalized depth-like invariant introduced in \cite{CMM23}. Owing to space limitations, we refer to the reader to article for proofs of the results.

\begin{definition}
Fix $(R,\fm)$ a local ring and $J \subset R$ an ideal. The {\bf generalized $F$-depth with respect to $J$} is 
\[\gfdp_J(R) := \inf \left\lbrace j \in \NN \left| J^N H_\fm^j(R) \not \subset 0^F_j \text{ for any } N \in \NN \right\rbrace \in \NN \cup \{ \infty \right. \}.\]
\end{definition}

\begin{remark} 
Notice that when $J=R$, $\gfdp_R(R) = \fdp R$, so weak $F$-nilpotence can be studied under the lens of $\gfdp_J$. Further, $\gfdp_\fm(R) = t$ if and only if $H_\fm^i(R)/0_i^F$ is finite length for all $i < t$. 

The invariant $\gfdp_\fm(R)$ was introduced in \cite{MM24} as {\bf generalized $F$-depth} $\gfdp R$. It is direct from the definition that $R$ is generalized weakly $F$-nilpotent if and only if $\gfdp(R) = \dim R$. 
\end{remark}

More classically, one also can consider the finiteness dimension of a local ring, which measures the first index for which $H^j_\fm(R)$ is not finite length, or the generalized depth with respect to an ideal $J$ introduced by Huckaba and Marley in \cite{HM94}, which measures the first index for which $H^j_\fm(R)$ is not annihilated by any power of $J$. Notably, the finiteness dimension is the generalized depth with respect to $\fm$. 

We enumerate some basic facts about $\gfdp_J(R)$ for an arbitrary ideal $J$ of $R$, which then also hold for $F$-depth and generalized $F$-depth; for proofs, see \cite[Thm. 3.8, Thm. 3.10]{CMM23}.
\begin{enumerate}
\item The quantity $\gfdp_J(R)$ is invariant under flat local extensions, thus, in particular we have $\gfdp_J(R) = \gfdp_{J\widehat{R}}(\widehat{R})$, where $\widehat{R}$ is the $\fm$-adic completion of $R$.
\item For $R_{\textrm{red}} := R/\sqrt{0}$, $\gfdp_J(R) = \gfdp_{JR_\textrm{red}}(R_{\textrm{red}})$. Note the analogous statement fails for depth. 
\item If $R$ is $F$-finite and equidimensional, $\gfdp_J(R) > \dim R$ if and only if $J \subset \sqrt{0}$. In particular, if $J$ is a nonzero ideal of a reduced ring, $\gfdp_J(R) \le \dim R$.
\end{enumerate}

To study $\gfdp_J(R)$ as an invariant of local rings, it is convenient to label certain cohomological annihilators as follows. Set $\fb_i(R) := \sqrt{ \Ann_R\left(H_\fm^i(R)/0^F_i\right) }$. From the definitions, it immediately follows that $\gfdp_J(R) = k$ if and only if $J \subset \fb_0(R) \cap \cdots \cap \fb_{k-1}(R)$ and $J \not\subset \fb_k(R)$.

\begin{lemma}\label{lem:singlej} For local ring $(R,\fm)$ and ideal $J \subset R$, the following are equivalent:
\begin{enumerate}
\item $J H^i_\fm(R)$ is nilpotent under the Frobenius action on $H^j_\fm(R)$,	
\item $J^N H^i_\fm(R)$ is similarly nilpotent for some $N$,
\item for each $x \in J$ and all $e \gg 0$, $x^{p^e}F^e : H^i_\fm(R) \to H^i_\fm(R)$ is the zero map,
\item $J \subset \fb_j(R)$. 
\end{enumerate}
\end{lemma}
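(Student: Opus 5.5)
We prove the equivalences in the cycle (1)$\Rightarrow$(2)$\Rightarrow$(4)$\Rightarrow$(3)$\Rightarrow$(1), with the indices $i$ and $j$ in the statement taken to be the same. Throughout, $M := H^i_\fm(R)$ is an artinian module with Frobenius action $F$. By \Cref{thm:HSL} there is an $e_0$ with $0^F_i = \ker F^{e_0}$. We use two elementary facts repeatedly: the $p$-linearity identity $F^e(r\eta) = r^{p^e}F^e(\eta)$, and the fact that $0^F_i$ is an $R$-submodule. The latter holds because $F^{e}(r\eta)=r^{p^{e}}F^{e}(\eta)$.

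The implication (1)$\Rightarrow$(2) is immediate with $N=1$.

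For (2)$\Rightarrow$(4), suppose $J^N M \subset 0^F_i$. Then $J^N$ annihilates $M/0^F_i$, so $J \subset \sqrt{\Ann_R(M/0^F_i)} = \fb_i(R)$.

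For (4)$\Rightarrow$(3), take $x \in J$. Then some power $x^n$ annihilates $M/0^F_i$; choose $e_1$ with $p^{e_1}\ge n$. For any $\eta \in M$ we have $x^{p^{e_1}}\eta \in 0^F_i = \ker F^{e_0}$. Hence for $e \ge e_0+e_1$,
\[
x^{p^e}F^e(\eta) = F^{e_0}\bigl(x^{p^{e-e_0}} F^{e-e_0}(\eta)\bigr).
\]
Here $x^{p^{e-e_0}}F^{e-e_0}(\eta) = x^{p^{e-e_0}-p^{e_1}}\cdot F^{e-e_0-e_1}(\cdot)$ composed appropriately. More cleanly, write $x^{p^{e-e_0}}F^{e-e_0}(\eta) = F^{e-e_0-e_1}\bigl(x^{p^{e_1}} F^{e_1}\ldots\bigr)$, but it is simpler to use
\[
x^{p^{e}}F^{e}(\eta) = F^{e-e_1}\bigl(x^{p^{e_1}}F^{e_1}(\eta)\bigr).
\]
Now $x^{p^{e_1}}F^{e_1}(\eta) = x^{p^{e_1}}\cdot(\text{an element of } M) \in 0^F_i = \ker F^{e_0}$. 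So it is killed once $e-e_1\ge e_0$. Since $e_0$ and $e_1$ do not depend on $\eta$, the map $x^{p^e}F^e$ is zero on all of $M$ for $e\ge e_0+e_1$.

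The implication (3)$\Rightarrow$(1) is the main obstacle, since we must pass from single elements $x$ to the whole ideal $J$. Choose generators $x_1,\dots,x_m$ of $J$, and pick $e$ large enough that $x_k^{p^e}F^e=0$ on $M$ for every $k$; this is possible because there are only finitely many generators. A general element of $JM$ is a sum $\sum_k x_k\eta_k$, and additivity of $F$ reduces us to showing each term $x_k\eta_k$ is nilpotent. For this,
\[
F^e(x_k\eta_k)=x_k^{p^e}F^e(\eta_k)=0,
\]
so $x_k\eta_k\in 0^F_i$. Since $0^F_i$ is a submodule, $JM\subset 0^F_i$.

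Finally, because $JM$ is a submodule of $M$ and $F$ restricts to it, we have $0^F_{JM}\subset \ker F^{e_0}$. Together with $JM\subset 0^F_i$, this gives $JM\subset \ker F^{e_0}$, so $JM$ is nilpotent in the uniform sense defined earlier, with a single exponent. This uniformity uses \Cref{thm:HSL}, and it is exactly what distinguishes being nilpotent from merely being contained in $0^F_i$.
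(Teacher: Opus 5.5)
Your proof is correct. It differs from the paper mainly in how the cycle is organized.

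The paper proves only two implications itself:
\begin{itemize}
\item (1)$\Rightarrow$(2), trivially;
\item (4)$\Rightarrow$(1), by the computation $F^e(x\eta)=x^{p^e}F^e(\eta)\in 0^F_i$ once $x^{p^e}\in\Ann_R(H^i_\fm(R)/0^F_i)$.
\end{itemize}
It outsources (2)$\Rightarrow$(3)$\Rightarrow$(4) to \cite[Lem. 3.2, Lem. 3.5]{CMM23}.

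You instead observe that (2)$\Rightarrow$(4) is immediate from the definition of $\fb_i(R)$ as a radical. You then split the paper's (4)$\Rightarrow$(1) into your (4)$\Rightarrow$(3) and (3)$\Rightarrow$(1). Both are driven by the same $p$-linearity identity together with the HSL exponent $e_0$. So your argument is self-contained at no extra cost, and the paper's is shorter only because of the citation.

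Two cleanups are worth making:
\begin{itemize}
\item In (4)$\Rightarrow$(3), delete the abandoned computations in the middle. Only the identity $x^{p^e}F^e(\eta)=F^{e-e_1}\bigl(x^{p^{e_1}}F^{e_1}(\eta)\bigr)$ is needed.
\item In (3)$\Rightarrow$(1), the final appeal to \Cref{thm:HSL} is unnecessary. Additivity already gives $F^e\bigl(\sum_k x_k\eta_k\bigr)=\sum_k x_k^{p^e}F^e(\eta_k)=0$ for your single $e$, so $F^e$ kills $JH^i_\fm(R)$ outright.
\end{itemize}

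It also helps to notice that $x^{p^e}F^e(\eta)=F^e(x\eta)$. Hence condition (3) for an element $x$ just says $xH^i_\fm(R)\subset 0^F_i$. Your (4)$\Rightarrow$(3) computation therefore shows that $\Ann_R(H^i_\fm(R)/0^F_i)$ is already a radical ideal. This is why the four conditions collapse so easily.
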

\begin{proof}
Clearly (1) implies (2). For proof the equivalence the implications (2) implies (3) implies (4), see \cite[Lem. 3.2, Lem. 3.5]{CMM23}. Finally, (4) implies (1) as for any $x \in J$ and $\eta \in H^i_\fm(R)$, $F^e(x \eta) = x^{p^e}F^e(\eta) \in 0^F_i$ for $e \gg 0$ sufficient to ensure $x^{p^e} \in \Ann_R\left(H_\fm^i(R)/0^F_i\right)$.
\end{proof}

The following theorem is a reformulation of \cite[Prop. 4.6]{KMPS23}, which gives an equivalence of generalized weak $F$-nilpotence with $F$-nilpotence on the punctured spectrum. We improve the result by replacing the punctured spectrum with $V(J)$.

\begin{theorem}\label{thm:replacepuncspec}
Let $(R,\fm)$ be an $F$-finite local ring of prime characteristic $p>0$ which is equidimensional. For any ideal $J\subset R$, we have $\gfdp_J(R) \geq t$ if and only if $\fdp(R_\fp) \geq t-d+\hgt \fp$ for all $\fp \not\in V(J)$. 
\end{theorem}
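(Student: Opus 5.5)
We will use local duality to turn nilpotence of Frobenius on local cohomology into a statement about finitely generated modules, and then localize. Since $R$ is $F$-finite, by Gabber it is a quotient of a regular (or Gorenstein) local ring $(S,\fn)$ of dimension $n$; after completing we may take $S$ complete, which is harmless because $\gfdp_J$ and $F$-depth are invariant under completion (fact (1) above). Local duality gives $H^j_\fm(R)^\vee \cong \operatorname{Ext}^{n-j}_S(R,S) =: M^{n-j}$, a finitely generated module. Following Lyubeznik's theory of $F$-finite modules (as in \cite{Lyu97,Lyu06}, and the argument of \cite[Prop. 4.6]{KMPS23}), the Frobenius action on $H^j_\fm(R)$ dualizes to a Cartier-type map $\kappa : F_*M^{n-j}\to M^{n-j}$. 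Nilpotence of $F$ on $H^j_\fm(R)$ is equivalent to the vanishing of the stable image $\mathcal{M}^{n-j}:=\bigcap_e \kappa^e(F^e_*M^{n-j})$, a finitely generated submodule.

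Next, from the definition and \Cref{lem:singlej}, $J\subset \fb_j(R)$ if and only if $J^N H^j_\fm(R)\subset 0^F_j$ for some $N$, which dually means that $\mathcal{M}^{n-j}$ is killed by a power of $J$, i.e. $\operatorname{Supp}\mathcal{M}^{n-j}\subset V(J)$. Because $\mathcal{M}$ is finitely generated, this holds if and only if $(\mathcal{M}^{n-j})_\fp=0$ for every $\fp\notin V(J)$. The key compatibility to establish is that forming stable images commutes with localization at $\fp$ (here we use $F$-finiteness, which makes $F_*$ commute with localization, and the descending chain of images stabilizing as in \Cref{rmk: hsl drops when localizing}). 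Then $(\mathcal{M}^{n-j})_\fp$ is the stable image for $\operatorname{Ext}^{n-j}_{S_\fp}(R_\fp,S_\fp)$, which by local duality over $S_\fp$ (dimension $n-\dim S/\fp$) is dual to $H^{i}_{\fp R_\fp}(R_\fp)$ with $i=j-\dim R/\fp$.

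Combining these, $\gfdp_J(R)\ge t$ holds iff for all $j<t$ and all $\fp\notin V(J)$, $H^{j-\dim R/\fp}_{\fp}(R_\fp)$ is nilpotent, i.e. iff $\fdp R_\fp \ge t-\dim R/\fp$. Equidimensionality of $R$ (together with catenarity of the excellent ring $R$) gives $\dim R/\fp = d-\hgt\fp$, and this yields exactly the bound $t-d+\hgt\fp$ in the statement. Indices $j-\dim R/\fp<0$ impose no condition, and large $t$ is handled by fact (3).

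The main obstacle is the localization step: we must check that the dual Cartier structure on $\operatorname{Ext}$ is compatible with localization and with the shift in local duality, so that localizing the stable image really computes the Frobenius-nilpotence of $H^\bullet_{\fp R_\fp}(R_\fp)$. To handle it, we would follow the proof of \cite[Prop. 4.6]{KMPS23} (the case $J=\fm$) essentially verbatim, replacing "supported at $\fm$" with "supported in $V(J)$" throughout.
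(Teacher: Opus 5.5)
Your proposal is correct and follows the paper's route. Both arguments reduce $\gfdp_J(R)\ge t$ to the containments $J\subset\fb_j(R)$ for $j<t$, i.e.\ $V(\fb_j(R))\subset V(J)$, and then use the localization behavior $\fb_i(R_\fp)=\fb_{i+d-\hgt\fp}(R)R_\fp$ together with $\dim R/\fp=d-\hgt\fp$. The paper simply cites this localization statement as \cite[Lem. 3.9]{CMM23}, whereas your local-duality/stable-image argument is essentially a sketch of that lemma's proof: the stable images of the dual Cartier map localize, and local duality over $S_\fp$ produces the index shift. (The stabilization of images you need comes from Matlis-dualizing Theorem~\ref{thm:HSL}, not from the cited remark.)
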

\begin{proof}
Without loss of generality, we may assume $R$ is complete by \cite[Thm. 3.8]{CMM23}. Now, if $\gfdp_J(R)\ge t$, then $J \subset \fb_j(R)$ for all $0 \le j < t$. Consequently, for $\fp \not \in V(J)$, $\fb_j(R)R_\fp = R_\fp$ for $0 \le j < t$. However, by \cite[Lem. 3.9]{CMM23}, this means $\fb_{j-d+\hgt \fp}(R_\fp) = R_\fp$ for all $0 \le j < t$, in particular, $\fdp(R_\fp) \ge t-d+\hgt \fp$.

Conversely, suppose $\fdp(R_\fp) \ge t-d+\hgt\fp$ for all $\fp \not \in V(J)$. Thus, $V(\fb_j(R)) \subset V(J)$ for all $0 \le j < t$ by \cite[Lem. 3.9]{CMM23} again. Hence $J \subset \fb_j(R)$ for $0 \le j < t$, and thus $\gfdp_J(R) \ge t$ by \Cref{lem:singlej}.
\end{proof}

\section{Geometric properties of \texorpdfstring{$F$}{F}-nilpotent rings and their generalizations}

In this section, we consider geometric properties of $F$-nilpotent rings and their generalizations. In particular, it is common to ask whether an $F$-singularity type determines a Zariski open subset of $\Spec R$. In the setting of $F$-finite rings, this is well-known for $F$-rational (due to Velez \cite{Vel95}) and $F$-injective (due to Schwede \cite{Sch09}) singularities, so it is natural to ask this question for $F$-nilpotent type singularities as well. This question was answered in \cite[Thm. 5.2]{KMPS23}, under the assumption that $R$ is $F$-finite or essentially of finite type over an excellent local ring. This proof adapts to the more general setting of the locus of primes for which $\gfdp_J$ is bounded below.

\begin{theorem}\label{thm:gfdpopen}
Let $(R,\fm)$ be an $F$-finite, equidimensional local ring, $J\subset R$ an ideal, and $t \ge 0$. The locus \[ \left\lbrace \fp \in \Spec R \left| \gfdp_{JR_\fp}(R_\fp) \ge t\right.\right\rbrace\] is open.
\end{theorem}
\begin{proof}
Write $d =\dim R$. We can reduce to the case that $R$ is complete and equidimensional, since $\gfdp_J(R)$ is invariant under completion. We can then apply \cite[Lem. 3.9]{CMM23} and \Cref{lem:singlej} to see that $(JR_\fp)H^j_{\fp R_\fp}(R_\fp)$ is nilpotent if and only if $JR_\fp$ is in $\fb_{j}(R_\fp) = \fb_{j+d-\hgt \fp}(R) R_\fp$. Hence, if $\gfdp_{JR_\fp}(R_\fp)\ge k$, then $JR_\fp H^j_{\fp R_\fp}(R_\fp)$ is nilpotent for $0 \le j < k$, so we need $JR_\fp$ to be inside $$\fb_0(R_\fp) \cap \cdots \cap \fb_{k-1}(R_\fp) = (\fb_{d-\hgt\fp}(R) \cap\cdots \cap \fb_{k-1+d-\hgt\fp}(R))R_\fp.$$ From here, it is easy to see that the locus is open, as it is the set of primes which are not in the support of the ideal $J+\fb_{d-\hgt\fp} (R) \cap \cdots \cap \fb_{k-1+d-\hgt\fp}(R)$ in $R/\left(\fb_{d-\hgt\fp} (R) \cap \cdots \cap \fb_{k-1+d-\hgt\fp}(R)\right)$.
\end{proof}

In addition to local cohomology methods, an important way to bound Frobenius test exponents is via finite, purely inseparable extensions. Recall a ring extension (i.e. an injective map of rings) $\phi : R \rightarrow S$ of rings of prime characteristic $p>0$ is \textit{purely inseparable} if, for each $s \in S$, we have $s^{p^e} \in R$ for some $e \in \NN$. This is equivalent to the Frobenius action on the cokernel $C=S/R$ given by $s+R \mapsto s^p+R$ being nilpotent. 

Purely inseparable extensions are interesting geometrically, as when $\phi : R \rightarrow S$ is purely inseparable, the induced map on spectra is a homeomorphism. Furthermore, Frobenius properties of $S$ tend to descend to $R$ ``up to nilpotence," a perspective explored and utilized in \cite{DMP24}, \cite{MP23}, and \cite{MS25}. 

Fix $\phi \colon (R,\fm_R) \rightarrow (S,\fm_S)$ a finite, purely inseparable ring extension of local rings such that $S^{p^{e_0}} \subset R$. In this context, we have $\sqrt{\fm_R S} = \fm_S$, so we can compute the local cohomology modules $H^j_{\fm_S}(S)$ as $H^j_{\fm_R}(S)$ instead. The following theorem is a restatement and extension of \cite[Thm. 2.17]{DMP24}.

\begin{theorem} 
Let $R\rightarrow S$ be a finite, purely inseparable map of local rings. For $J\subset R$ an ideal, $\gfdp_J(R) \geq t$ if and only if $\gfdp_{JS}(S) \geq t$. In particular, $R$ is (generalized) weakly $F$-nilpotent if and only if $S$ is. Similarly, $R$ is $F$-nilpotent if and only if $S$ is.
\end{theorem}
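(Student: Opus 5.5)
The plan is to compare the Frobenius actions on $H^j_{\fm_R}(R)$ and $H^j_{\fm_S}(S)=H^j_{\fm_R}(S)$ through the short exact sequence of $R$-modules with compatible Frobenius actions
\[0 \to R \xrightarrow{\phi} S \to C \to 0, \qquad C = S/\phi(R).\]
Here $C$ is a finitely generated $R$-module, and its induced action $\overline{F}$ is nilpotent: since $S^{p^{e_0}}\subset R$, we have $\overline{F}^{e_0}=0$ on $C$. Applying $H^\bullet_{\fm_R}(-)$ gives a long exact sequence
\[\cdots \to H^{j-1}_{\fm}(C) \to H^j_\fm(R) \to H^j_\fm(S) \to H^j_\fm(C) \to \cdots\]
compatible with Frobenius actions. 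Since $\overline{F}^{e_0}$ is zero on $C$, functoriality of local cohomology, computed for instance via the Čech complex on generators of $\fm_R$, shows that $F^{e_0}$ kills every $H^j_\fm(C)$.

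The key step is a ``nilpotence-up-to-$J$'' diagram chase, using \Cref{lem:singlej}(3): $J\subset \fb_j$ if and only if for each $x\in J$ the map $x^{p^e}F^e$ vanishes on $H^j$ modulo nilpotents for $e\gg0$.

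For the direction from $R$ to $S$, take $\eta\in H^j_\fm(S)$ with $j<t$ and $x\in J$. The element $F^{e_0}(x\eta)$ maps to $0$ in $H^j(C)$, so it lifts to some $\xi\in H^j(R)$. Because $J\subset \fb_j(R)$, the element $x^{p^e}F^e(\xi)$ is nilpotent for $e\gg0$, and hence so is its image. This gives $JS\subset\fb_j(S)$, using that $\fb_j(S)$ is a radical ideal and $JS$ is generated by $J$.

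For the direction from $S$ to $R$, take $\xi\in H^j(R)$. The element $x^{p^e}F^e(\xi)$ maps to something nilpotent in $H^j(S)$, say killed by $F^{e'}$. Then $F^{e'}(x^{p^e}F^e\xi)$ lies in the image of $H^{j-1}(C)$, and applying $F^{e_0}$ once more kills it. So $x\in\fb_j(R)$.

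To make the second direction cleaner, I would use the standard fact that a map whose kernel and cokernel are killed by a power of Frobenius induces an isomorphism ``modulo nilpotents.'' Concretely, there is a factorization $H^j(S)\to H^j(R)$ of $F^{e_0}$, coming from the ring map $S\to R$, $s\mapsto s^{p^{e_0}}$. Its composites with $H^j(R)\to H^j(S)$ in either order are $F^{e_0}$. With this, each direction becomes a one-line comparison, and the ideal statements $J\subset\fb_j(R)\Leftrightarrow JS\subset \fb_j(S)$ follow for every $j$. This also handles $j=t$ symmetrically, so $\gfdp$ agrees exactly. Taking $J=R$ or $J=\fm_R$ and noting $\sqrt{\fm_RS}=\fm_S$ and $\dim R=\dim S$ gives the weakly $F$-nilpotent and generalized weakly $F$-nilpotent cases.

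For $F$-nilpotence, I also need $0^F_d(R)=0^*_d(R)$ if and only if the same holds for $S$. Using the maps above, $H^d(R)\to H^d(S)$ carries $0^*$ into $0^*$, because elements of $R^\circ$ stay in $S^\circ$ under a homeomorphism on spectra. Conversely, the map $s\mapsto s^{p^{e_0}}$ sends $0^*_d(S)$ into $0^*_d(R)$, since for $c\in S^\circ$ we have $c^{p^{e_0}}\in R^\circ$. Combined with the fact that both composites are $F^{e_0}$, a chase identifies the conditions. I expect the main obstacle to be checking that the map $S\to R$, $s\mapsto s^{p^{e_0}}$, really induces maps on local cohomology that compose to $F^{e_0}$ in both orders, and that it is compatible with the Frobenius actions and with the tight closure test elements. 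Once that is in place, every other step is formal.
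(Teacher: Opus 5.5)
Your proposal is correct. For the $\gfdp$ statement it follows the paper's route. The paper uses the same sequence $0\to R\to S\to C\to 0$, notes that the Frobenius action on each $H^j_{\fm_R}(C)$ is nilpotent, and then cites \cite[Thm. 3.5]{MM24} for the fact that $JH^j_{\fm_R}(R)$ is nilpotent if and only if $(JS)H^j_{\fm_S}(S)$ is. Your two diagram chases unpack that citation.

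One small fix in your $R\to S$ chase: the element you obtain is $x^{p^e+p^{e+e_0}}F^{e+e_0}(\eta)$, which is not literally of the form in \Cref{lem:singlej}(3). You need the one-line observation that if $F^{e'}(\zeta)$ is nilpotent then so is $\zeta$. Applied to $\zeta=x^2\eta$, this gives $x^2H^j_\fm(S)\subset 0^F_j(S)$. Your $\psi$ version avoids the issue.

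Where you genuinely depart from the paper is the factorization through $\psi\colon S\to R$, $s\mapsto s^{p^{e_0}}$, and its use for $F$-nilpotence. The paper does not prove $F$-nilpotence here; it takes it from \cite[Thm. 2.17]{DMP24}.

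The step you flagged as the main obstacle is routine. On the \v{C}ech complex on generators $g$ of $\fm_R$, set $s/g^n\mapsto s^{p^{e_0}}/g^{np^{e_0}}$; this is well defined because $R\subset S$ is injective. The map is semilinear along $\psi$ and commutes with $F$, and both composites with $\phi$ equal $F^{e_0}$ already at the level of complexes.

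With $\psi_*$ available, you no longer need the long exact sequence or the compatibility of connecting maps with Frobenius. The equality $\gfdp_J(R)=\gfdp_{JS}(S)$ and the comparison of $0^*_d$ with $0^F_d$ both follow from the same two-line argument. The only extra input for tight closure is that $c\in R^\circ$ implies $c\in S^\circ$, and $c\in S^\circ$ implies $c^{p^{e_0}}\in R^\circ$. Both come from the order-preserving bijection $\Spec S\to\Spec R$.

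So your argument is self-contained and symmetric, and it settles the $F$-nilpotent claim without outside input. The paper's route uses only the nilpotence of $H^\bullet_{\fm_R}(C)$. It is therefore the kind of argument that applies to any exact sequence of modules with Frobenius action whose third term is nilpotent, not only to extensions that admit a $p^{e_0}$-th power map back.
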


\begin{proof}
From the short exact sequence $0 \to R\to S\to C \to 0$, we obtain a long exact sequence in local cohomology as below. \[ \ldots \to H^{j-1}_{\fm_R}(C) \to H^j_{\fm_R}(R) \to H^j_{\fm_R}(S) \to H^j_{\fm_R}(C) \to H^{j+1}_{\fm_R}(R) \to \ldots\] Since the Frobenius action on $C$ is nilpotent, so is the Frobenius action on $H^j_{\fm(R)}(C)$ for all $j$, and consequently $JH^j_{\fm_R}(R)$ is nilpotent if and only if \[JH^j_{\fm_R}(S)=(JS)H^j_{\fm_S}(S)\] is nilpotent by \cite[Thm. 3.5]{MM24}, from which the first and second claim follow. The final claim follows from \cite[Thm. 2.17]{DMP24}. 
\end{proof}

In the context of Frobenius test exponents, we can exploit $e_1=\fte(IS)$ to control $\fte I$. In particular, if $x \in I^F$, then $x \in I^FS \subset (IS)^F$. So one has $x^{p^{e_1}} \in (IS)^{\fbp{p^{e_1}}}$ and can then conclude that $x^{p^{e_0+e_1}} \in I^{\fbp{p^{e_0+e_1}}}R$, so $\fte I \le e_0+e_1$. In particular, if $S$ has a uniform bound on the Frobenius test exponent of a class of ideals, then a similar bound ought to extend to the same class of ideals in $R$. The most extreme case is where the Frobenius test exponent of any ideal of $S$ is $0$, which is equivalent to $F$-purity in this case. Compare the following with \cite[Thm. 4.4 and Rmk. 4.5]{DMP24}.

\begin{theorem}\label{thm:FTEasscent}
For $R \to S$ a finite, purely inseparable local extension of local rings, $\fte R$ is finite if and only if $\fte S$ is finite. In particular, if $S$ is $F$-pure, then $\fte I \le e_0$ for any ideal $I\subset R$, where $e_0$ is least such that $F^{e_0}(S)\subset R$.
\end{theorem}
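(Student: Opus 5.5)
The plan is to transfer Frobenius test exponent bounds in both directions along $\phi : R\to S$, working with parameter ideals. The two facts used throughout are that $S$ is finite over $R$ and that $\sqrt{\fm_R S}=\fm_S$. Together these imply that a system of parameters of $R$ maps to a system of parameters of $S$, since $\dim R=\dim S$ and $S/\fq S$ is finite over the artinian ring $R/\fq$. Fix $e_0$ with $F^{e_0}(S)\subset R$.

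\emph{$\fte S$ finite implies $\fte R$ finite.} I would run the argument sketched just before the statement. Let $\fq=(x_1,\dots,x_d)$ be a full parameter ideal of $R$, and set $e_1=\fte S$. Then $\fq S$ is a full parameter ideal of $S$, so $\fte(\fq S)\le e_1$. Take $x\in\fq^F$. Then $x\in(\fq S)^F$, and hence $x^{p^{e_1}}\in(\fq S)^{\fbp{p^{e_1}}}$. Write $x^{p^{e_1}}=\sum s_i x_i^{p^{e_1}}$ with $s_i\in S$. Raising to the $p^{e_0}$ power gives $x^{p^{e_0+e_1}}=\sum s_i^{p^{e_0}}x_i^{p^{e_0+e_1}}$, and each $s_i^{p^{e_0}}$ lies in $R$. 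Thus $x^{p^{e_0+e_1}}\in\fq^{\fbp{p^{e_0+e_1}}}$, so $\fte R\le e_0+e_1$. The same computation works for an arbitrary ideal $I\subset R$ with $e_1=\fte(IS)$. For the ``in particular'' claim: if $S$ is $F$-pure then $(IS)^F=IS$, so $e_1=0$, and therefore $\fte I\le e_0$ for every ideal $I\subset R$.

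\emph{$\fte R$ finite implies $\fte S$ finite.} This is the harder direction, because parameter ideals of $S$ need not be extended from $R$. Let $\fq'=(y_1,\dots,y_d)$ be a full parameter ideal of $S$, and set $z_i=y_i^{p^{e_0}}\in R$. I would first check that $\fq=(z_1,\dots,z_d)R$ is a full parameter ideal of $R$; this holds because $\sqrt{\fq S}=\sqrt{\fq' }=\fm_S$ and $R\to S$ is finite, so $\sqrt{\fq}=\fm_R$. Now take $y\in(\fq')^F$ and set $x=y^{p^{e_0}}\in R$. Because $y$ lies in $(\fq')^F$, we have $y^{p^{e}}\in(\fq')^{\fbp{p^e}}$ for some $e$. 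Raising to the $p^{e_0}$ power puts $x^{p^e}$ in $\fq^{\fbp{p^e}}S$, and one further power of $p^{e_0}$ brings the coefficients into $R$. This gives $x^{p^{e+e_0}}\in\fq^{\fbp{p^{e+e_0}}}$, so $x\in\fq^F$.

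Let $e_2=\fte R$. From $x\in\fq^F$ we get $x^{p^{e_2}}\in\fq^{\fbp{p^{e_2}}}=(\fq')^{\fbp{p^{e_0+e_2}}}\cdot$ (up to extension to $S$). That is, $y^{p^{e_0+e_2}}\in(\fq')^{\fbp{p^{e_0+e_2}}}$, so $\fte S\le e_0+e_2$.

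The main obstacle is the bookkeeping in this second direction: confirming that $\fq$ is genuinely a system of parameters of $R$, and that passing between $\fq^{\fbp{p^e}}S$ and $(\fq')^{\fbp{p^{e+e_0}}}$ is exact rather than merely an inclusion. I would verify both points directly from $z_i=y_i^{p^{e_0}}$ and the dimension equality $\dim R=\dim S$ coming from finiteness.
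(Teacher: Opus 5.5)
Your proof is correct. For $\fte S<\infty\Rightarrow\fte R<\infty$ and the $F$-pure clause, you follow the same computation the paper gives just before the statement. You extend $\fq$ (or an arbitrary $I$) to $S$, use $\fte(IS)$, and raise once more to the $p^{e_0}$ power to pull the coefficients back into $R$, which gives $\fte I\le e_0+\fte(IS)$.

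For the converse, the paper gives no argument and only points to \cite[Thm. 4.4 and Rmk. 4.5]{DMP24}. You instead give a self-contained descent argument. You replace a full parameter ideal $\fq'=(y_1,\dots,y_d)$ of $S$ by $\fq=(y_1^{p^{e_0}},\dots,y_d^{p^{e_0}})R$. This ideal is $\fm_R$-primary because $\sqrt{IS}\cap R=\sqrt{I}$ for the integral extension $R\subset S$. You then use $\fq^{\fbp{p^e}}S=(\fq')^{\fbp{p^{e+e_0}}}$. The gain over the citation is the explicit symmetric pair of bounds $\fte R\le e_0+\fte S$ and $\fte S\le e_0+\fte R$.

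Two minor points. First, in the converse, when you raise $y^{p^e}=\sum s_iy_i^{p^e}$ to the $p^{e_0}$ power, the coefficients $s_i^{p^{e_0}}$ already lie in $R$. So $x^{p^e}\in\fq^{\fbp{p^e}}$ directly, and the extra Frobenius power is unnecessary. Second, the last step needs only the inclusion $\fq^{\fbp{p^{e_2}}}S\subseteq(\fq')^{\fbp{p^{e_0+e_2}}}$, which is in fact an equality on generators. So the ``exactness'' issue you flag as the main obstacle does not arise.
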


Given a reduced, $F$-finite ring $R$, there is a canonical finite, purely inseparable extension one may consider, i.e., the weak normalization $R^{\WN}$. %This is also written $^*R$ in the literature. 

\begin{center}
\begin{tikzcd}
    R \arrow{dr}\arrow{rr} & \, & R^{\textrm{N}} \\
    \, & R^{\WN} \arrow{ur} & \,
\end{tikzcd}
\end{center}

The weak normalization is the collection of elements $x$ of the total ring of quotients of $R$ which satisfy an equation of the form $x^{p^e}=r$ for some $e \in \NN$ and $r \in R$, that is, the weak normalization is the set of $p^e$-th roots of elements of $R$ living in the total ring of quotients of $R$. Clearly $R^{\WN}$ is inside the normalization of $R$ and as we are assuming $F$-finite, $R\rightarrow R^{\textrm{N}}$ and consequently $R\rightarrow R^{\WN}$ are finite extension of $R$. Further, $R\rightarrow R^{\WN}$ is purely inseparable by definition. Hence, by the previous theorem, a local reduced $F$-finite ring $R$ has finite Frobenius test exponent if and only if $R^{\WN}$ has finite Frobenius test exponent (cf. \cite[Cor. 4.2]{DMP24}.

The weak normalization also plays a role in the study of $F$-nilpotent rings. In \cite{MP23}, the authors studied pinched Veronese rings, which are formed by removing an algebra generator from a Veronese subalgebra of a polynomial ring over a field. To consider when pinched Veronese rings were $F$-nilpotent, it was determined that the behavior was governed by the map from the pinched Veronese to the full Veronese, which is $F$-pure (and more). For these examples, the normalization map is purely inseparable, and in \cite[Thm. 1.2]{DMP24}, this was shown to be a necessary condition for $F$-nilpotence in general.

\begin{theorem}
Let $(R,\fm)$ be a reduced local ring. If $R$ is $F$-nilpotent, $R \rightarrow R^{\textrm{N}}$ is purely inseparable (equivalently, $R^{\WN}=R^{\textrm{N}}$), and the converse holds either when $\dim R = 1$ or if $R$ is an affine semigroup ring.
\end{theorem}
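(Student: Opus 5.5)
The plan is to handle the forward implication through the tight closure of principal ideals generated by nonzerodivisors, and to obtain both converse cases from the preceding theorem on finite purely inseparable extensions.

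\textbf{Forward direction.} Assume $R$ is $F$-nilpotent. Since $R$ is $F$-finite and reduced, $R^{\textrm{N}}$ is a finite extension of $R$ inside the total ring of quotients. Take $s \in R^{\textrm{N}}$ and write $s = a/b$ with $a,b \in R$ and $b$ a nonzerodivisor. If $b$ is a unit there is nothing to prove, so assume $b \in \fm$.

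Because $R$ is reduced, the nonzerodivisor $b$ avoids every minimal prime. So $b$ is part of a system of parameters and $(b)$ is a partial parameter ideal. Choose $c$ in the conductor $(R :_R R^{\textrm{N}})$ that avoids all minimal primes; this is possible because $R^{\textrm{N}}$ is finite and birational, so the conductor contains a nonzerodivisor. Then $c \in R^\circ$. Since $s^{p^e} \in R^{\textrm{N}}$ for every $e$, we get $c\,s^{p^e} \in R$, that is,
\[
c\,a^{p^e} \in b^{p^e}R = (b)^{\fbp{p^e}} \quad \text{for all } e,
\]
and hence $a \in (b)^*$.

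Next I would invoke the characterization of Polstra--Quy \cite{PQ19}: an $F$-nilpotent local ring satisfies $\fq^* = \fq^F$ for every (partial) parameter ideal $\fq$. If only the full-parameter version is available, I would pass to partial parameter ideals by the Krull intersection argument of \Cref{rmk: full fte bounds partial fte}. This gives $a \in (b)^F$, so $a^{p^e} \in b^{p^e}R$ for some $e$. As $b$ is a nonzerodivisor, $s^{p^e} = a^{p^e}/b^{p^e} \in R$, so $R \to R^{\textrm{N}}$ is purely inseparable.

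The main obstacle is this forward step, specifically making sure the cited equality $\fq^* = \fq^F$ applies to the height-one ideal $(b)$ and not only to full parameter ideals. That is the point where the Krull intersection trick may be needed.

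\textbf{Converses.} Suppose $R \to R^{\textrm{N}}$ is purely inseparable. Then $\Spec R^{\textrm{N}} \to \Spec R$ is a homeomorphism, so $R^{\textrm{N}}$ is local, and the preceding theorem applies to the finite purely inseparable local extension $R \to R^{\textrm{N}}$. It gives that $R$ is $F$-nilpotent if and only if $R^{\textrm{N}}$ is. It therefore suffices to show $R^{\textrm{N}}$ is $F$-rational, since $F$-rational rings are $F$-nilpotent.

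If $\dim R = 1$, then $R^{\textrm{N}}$ is a one-dimensional normal local ring, hence regular, hence $F$-rational. If $R$ is an affine semigroup ring, localized at the homogeneous maximal ideal, then $R^{\textrm{N}}$ is a normal affine semigroup ring. By Hochster it is a direct summand of a polynomial ring, so it is strongly $F$-regular and in particular $F$-rational. In the semigroup case I would check that the local and graded versions match, using the localization invariance noted before.
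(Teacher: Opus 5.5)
Your proof is correct. The paper gives no argument of its own here; it only cites \cite[Thm. 1.2]{DMP24}. Your route looks like the natural one, and plausibly the one used there, though the paper does not show it. In the forward direction, the conductor argument gives $bR^{\textrm{N}}\cap R\subseteq (b)^*$. Trivially $(b)^F=bR^{\WN}\cap R$. So once $F$-nilpotence forces $(b)^*=(b)^F$ for nonzerodivisors $b\in\fm$, you get $R^{\textrm{N}}=R^{\WN}$. The converses follow from the preceding theorem on finite purely inseparable extensions, once you know $R^{\textrm{N}}$ is $F$-rational.

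The one step you should make explicit is the passage from full to partial parameter ideals. The Krull intersection argument of \Cref{rmk: full fte bounds partial fte} only works with an exponent $e$ that is independent of $n$, so it needs $\fte R<\infty$. This holds here: $F$-nilpotent rings are weakly $F$-nilpotent, so Quy's bound $\fte R\le \sum_{k}\binom{d}{k}\HSL_k(R)$ from \cite{Quy19} applies. The argument then runs as follows.
\begin{itemize}
\item Extend $b$ to a system of parameters $b,y_2,\ldots,y_d$.
\item Then $a\in(b)^*\subseteq(b,y_2^n,\ldots,y_d^n)^*=(b,y_2^n,\ldots,y_d^n)^F$.
\item Hence $a^{p^e}\in(b,y_2^n,\ldots,y_d^n)^{\fbp{p^e}}$ for every $n$, with the single exponent $e=\fte R$.
\item Intersecting over $n$ gives $a^{p^e}\in(b^{p^e})$.
\end{itemize}
Without this uniformity, each $n$ could need a different exponent, and the intersection step would fail.

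In the semigroup case, pure inseparability of $R\to R^{\textrm{N}}$ and $F$-rationality of the normal semigroup ring $R^{\textrm{N}}$ both survive localization. So your argument works at every prime, not only at the homogeneous maximal ideal, whichever convention for $F$-nilpotence of a non-local ring is intended.
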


For rings of dimension 1, i.e. curves, pure inseparability of the normalization map is also related to the number of \textit{geometric branches}, that is, the number of minimal primes of the strict henselization of $R$. In particular, for excellent reduced curves, $R$ is geometrically unibranched (i.e. the number of geometric branches is $1$) at each closed point if and only if $R$ is $F$-nilpotent, \cite[Thm. 1.2]{DMP24}. Furthermore, the authors used tight and Frobenius closure to count the number of geometric branches $b(\fm)$ at a closed point $\fm$ of $\Spec R$ as \[
b(\fm) = \dim_{R / \fm} \left(0^*_{H^1_{\fm R_\fm}(R_\fm)}/0^F_{H^1_{\fm R_\fm}(R_\fm)}\right) + 1 = \dim_{R/\fm}\left((x)^*/(x)^F\right) + 1
\] where $x$ is a sufficiently general parameter element of $R_\fm$.

\begin{remark}\label{rmk: pi locus}
For a general finite ring extension $\phi: R\rightarrow S$ which is not necessarily purely inseparable, we can consider the locus \[X_\phi = \{ \fp \in \Spec R \left| \phi \otimes_R R_\fp \text{ is purely inseparable} \right.  \}.\] On $X_\phi$, $\phi_\fp: R_\fp \rightarrow S\otimes_R R_\fp$ is purely inseparable, and so $S\otimes_R R_\fp = S_\fq$ is another local ring as purely inseparable maps are homeomorphisms on spectra. In particular, for each $\fp \in X_\phi$ there is a unique $\fq \in \Spec S$ such that $\fq$ lies over $\fp S$, and $\phi_\fp :R_\fp \rightarrow S_\fq$ is a purely inseparable local homomorphism of local rings.

Furthermore, the set $X_\phi$ is open in $\Spec R$ since it is the complement of the support of the finitely generated $R$-module; $S/R$ modulo its nilpotent submodule $0^{\overline{F}}_{S/R}$ under the action $\overline{F}(s+R) = s^p+R$.

\end{remark}
\begin{theorem}\label{FTE:open}
Suppose $\phi \colon R\rightarrow S$ is a finite ring extension of $F$-finite rings of prime characteristic $p>0$. Adopting the notation of \Cref{rmk: pi locus}, there is an $e_0 \in \NN$ such that for all $\fp$ in $X_\phi$ such that $S_\fq$ is $F$-pure, $\fte I \le e_0$ for all ideals $I\subset R_\fp$. In particular, if the image of the $F$-pure locus of $S$ is also open in $\Spec R$, then there is an open set in $\Spec R$ on which the Frobenius test exponent of all ideals has a uniform upper bound.
\end{theorem}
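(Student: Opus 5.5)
The plan is to exploit the argument given just before \Cref{thm:FTEasscent}, making the exponent uniform over $X_\phi$. First I would produce a single $e_0$ that works for all primes in $X_\phi$ at once. Consider the cokernel $C = S/\phi(R)$ with the Frobenius action $\overline{F}(s+R) = s^p+R$. Since $S$ is a finite $R$-module, $C$ is finitely generated. By the discussion of HSL numbers for finitely generated modules with Frobenius action, there is an $e_0$ with $0^{\overline{F}}_C = \ker(\overline{F}^{e_0})$.

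By \Cref{rmk: hsl drops when localizing}, the HSL number of $C_\fp$ is at most $e_0$ for every prime $\fp$. For $\fp \in X_\phi$, the localized extension $\phi_\fp : R_\fp \to S_\fq$ is purely inseparable, so $C_\fp = S_\fq/R_\fp$ is entirely nilpotent. Hence $\overline{F}^{e_0}(C_\fp)=0$, that is, $s^{p^{e_0}} \in R_\fp$ for every $s \in S_\fq$. This is the uniform exponent, independent of $\fp$.

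Next, fix $\fp \in X_\phi$ with $S_\fq$ $F$-pure, an ideal $I \subset R_\fp$, and $x \in I^F$. Since $F$-purity forces all ideals of $S_\fq$ to be Frobenius closed, we get $x \in (IS_\fq)^F = IS_\fq$. Write $x = \sum a_i s_i$ with $a_i \in I$ and $s_i \in S_\fq$. Then
\[x^{p^{e_0}} = \sum a_i^{p^{e_0}} s_i^{p^{e_0}},\]
and each $s_i^{p^{e_0}}$ lies in $R_\fp$. Therefore $x^{p^{e_0}} \in I^{\fbp{p^{e_0}}}$, which gives $\fte I \le e_0$. This step is essentially the argument of \Cref{thm:FTEasscent}. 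The only care needed is that $\phi_\fp$ is injective (localization is exact) and that $S\otimes_R R_\fp = S_\fq$ is local, both of which are supplied by \Cref{rmk: pi locus}.

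For the final assertion, $X_\phi$ is open by \Cref{rmk: pi locus}. Let $U$ be the image of the $F$-pure locus of $S$, assumed open. Then $X_\phi \cap U$ is open, and for $\fp$ in it the unique $\fq$ over $\fp$ lies in the $F$-pure locus, so the bound $e_0$ applies. The main obstacle I expect is the uniformity step: ensuring that a global HSL number of $C$ controls the pure inseparability exponent at every $\fp \in X_\phi$. This rests on $C$ being finitely generated, so that the HSL number is finite, together with \Cref{rmk: hsl drops when localizing}.
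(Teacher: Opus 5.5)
Your proof is correct and follows the paper's argument. You take $e_0$ to be the HSL number of $S/R$, use \Cref{rmk: hsl drops when localizing} to get $s^{p^{e_0}} \in R_\fp$ for all $s \in S_\fq$ uniformly over $\fp \in X_\phi$, and conclude from the $F$-purity of $S_\fq$. The only differences are that you carry out explicitly the final step the paper cites from \cite[Rmk. 4.5]{DMP24}, and you spell out why intersecting $X_\phi$ with the image of the $F$-pure locus gives the required open set.
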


\begin{proof} 
Let $e_0$ be the HSL number of $S/R$. By \Cref{rmk: hsl drops when localizing}, we have that for all $\fp \in X_\phi$, the HSL number of $S_{\fq}/R_\fp$ is at most $e_0$. But as $S_\fq$ is $F$-pure and $\phi_\fp:R_\fp\rightarrow S_\fq$ is finite and purely inseparable, $\fte I \le e_0$ for any ideal $I\subset R_\fp$ by \cite[Rmk. 4.5]{DMP24}.
\end{proof}

\section{Applications finiteness of Frobenius test exponents of parameter ideals}\label{sec:genfte}  
To begin this section, we review properties of filter regular sequences, which stand in for regular sequences in rings which are not Cohen-Macaulay.

\begin{definition}
    Let $(R,\fm)$ be a local ring. An element $x \in R$ is a \textbf{filter regular element} if the ideal $(0:_R x)$ is a finite length $R$-module, equivalently if $x \not \in \fp$ for any $\fp \in \operatorname{Ass}(R) \setminus \{\fm\}$. A sequence $x_1,\ldots,x_t$ is a \textbf{filter regular sequence} if the image of $x_i$ in $R/(x_1,\ldots,x_{i-1})$ is a filter regular element of $R/(x_1,\ldots,x_{i-1})$.
\end{definition}

\begin{remark}\label{rmk: filter regular facts}
If $\fq$ is a full parameter ideal in a local ring $(R,\fm)$ then $\fq$ can be generated by a filter regular sequence of length equal to $\dim R$. Furthermore, if $x_1,\ldots,x_t$ is a filter regular sequence, so is $x_1^{n_1},\ldots,x_t^{n_t}$ for each $n_i \in \NN$. Finally, if $x_1,\ldots,x_t$ is a filter regular sequence, then $(x_1,\ldots,x_{i-1}):_R x_i$ is a finite length $R$-module for each $1 \le i \le t$, in particular, the support of $(x_1,\ldots,x_{i-1}):_R x_i$ is contained in $\{\fm\}$ for each $i$. Thus, for each $\fp \neq \fm$, $x_1/1,\ldots,x_t/1$ is a (possibly improper) regular sequence in $R_\fp$. 
\end{remark}

Now fix an ideal $J \subset R$ and a natural number $0 \leq t \leq d$. Consider the set $$\fQ_{J,t} := \{ \fq = (x_1,\ldots,x_t) \subset J \left| x_1,\ldots,x_t \textrm{ is a filter regular sequence} \right. \}.$$ Write $\fte \fQ_{J,t} := \sup\{\fte \fq \mid \fq \in \fQ_{J,t}\}.$ If $R$ is generalized weakly $F$-nilpotent, $\fte \fQ_{J,t}$ is finite for any $J$ and $1 \le t \le d$  by \Cref{rmk: full fte bounds partial fte}. Our aim now is to show how to use generalized $F$-depth statements to bound $\fte \fQ_{J,t}$ in a weaker setting, which generalizes the main result of \cite{HQ22}.  

To give context, we quickly review how nilpotent local cohomology modules and their HSL numbers assist in bounding Frobenius test exponents of parameter ideals following the approaches of \cite{Mad19} and \cite{Quy19}. Note that, for a full parameter ideal $\fq$, $H_\fm^0(R/\fq) = R/\fq$ and $0^\rho_0(R/\fq) = \fq^F/\fq$. From this, one can see $\fte \fq = \HSL^\rho_0(R/\fq)$. This bound depends on $\fq$ -- to produce a uniform  bound on $\fte \fq$, we build an inductive bound on $\HSL_0^\rho(R/\fq)$ via $\HSL^j(R)$. 

Let $t \leq \dim R$, $\fq = (x_1,\ldots,x_t)$ be a parameter ideal generated by a filter regular sequence, and set $\fq_i := (x_1,\ldots,x_i)$ for $i \leq t$. For each $i$ and each $e\in \NN$, we have the commutative diagram of short exact sequences below whose horizontal maps are $R$-linear and vertical maps are $p^e$-linear \begin{center}
    \begin{tikzcd}
        0 \arrow{r} & R/\left(\fq_{i-1}:_R x_i\right) \arrow{r}{\cdot x_i} \arrow{d}{(f^e)'} & R/\fq_{i-1}\arrow{d}{f^e} \arrow{r} & R/\fq_i\arrow{d}{f^e} \arrow{r} & 0 \\
        0 \arrow{r} & R/\left(\fq_{i-1}^{\fbp{p^e}}:_R x_i^{p^e}\right) \arrow{r}{\cdot x_i^{p^e}} & R/\fq_{i-1}^{\fbp{p^e}} \arrow{r} & R/\fq_i^{\fbp{p^e}} \arrow{r} & 0
    \end{tikzcd}
\end{center} where $f^e$ is the iterated relative Frobenius map and $(f^e)'$ is the composition \[
R/\left(\fq_{i-1}:_R x_i\right) \xrightarrow{f^e} R/(\fq_{i-1}:_R x_i)^{\fbp{p^e}} \xrightarrow{\pi} R/\left( \fq_{i-1}^{\fbp{p^e}}:_R x_i^{p^e}\right).
\] Since $x_1,\ldots,x_t$ is a filter regular sequence, the map $(f^e)'$ induces on local cohomology agrees with the relative Frobenius map $\rho^e$ on the local cohomology modules and for $j>0$ we have $H^j_\fm(R/(\fq_{i-1}:_R x_i)) \simeq H^j_\fm(R/\fq_{i-1})$.

Applying $H^j_\fm$ to the commutative diagram above, we have the following commutative diagram whose rows are long exact sequences of $R$-linear maps and whose vertical maps are $p^e$-linear. \begin{center}
    \begin{tikzcd}\label{fig:ladder}
        \cdots \arrow{r}{\cdot x_i} & H^j_\fm(R/\fq_{i-1}) \arrow{d}{\rho^e}\arrow{r}{\alpha_0} & H^j_\fm(R/\fq_i) \arrow{r}{\delta_0}\arrow{d}{\rho^e} & H^{j+1}_\fm(R/\fq_{i-1}) \arrow{r} \arrow{d}{\rho^e}& \cdots \\
        \cdots \arrow{r}{\cdot x_i^{p^e}} & H^j_\fm\left(R/\fq_{i-1}^{\fbp{p^e}}\right) \arrow{r}{\alpha_e} & H^j_\fm\left(R/\fq_i^{\fbp{p^e}}\right) \arrow{r}{\delta_e} & H^{j+1}_\fm\left(R/\fq_{i-1}^{\fbp{p^e}}\right) \arrow{r} & \cdots
    \end{tikzcd}
\end{center} From this ladder, we can chase the diagram to inductively obtain bounds \[\HSL_j^\rho(R/\fq_i) \le \sum_{k=j}^{i+j} \binom{i}{k-j} \HSL_k(R)\] so long as the preimage of $0^\rho_j\left(R/\fq_i^{\fbp{p^e}}\right)$ under $\alpha_e$ is nilpotent under $\rho$; see the proof of \cite[Thm. 3.1]{Mad19}. It is natural to expect depth-like conditions will ensure this preimage is nilpotent. \begin{enumerate}
    \item If $R$ is Cohen-Macaulay (i.e. $\operatorname{depth} R = d$), then $H^j_\fm\left(R/\fq_{i-1}^{\fbp{p^e}}\right)$ vanishes, and so the preimage described above is nilpotent. Katzman-Sharp showed $\fte R = \HSL R$ in this case in \cite{KS06}.
    \item If $R$ is generalized Cohen-Macaulay (i.e. $\findim R = d$), then one may find $e \gg 0$, uniform in $\fq$, so that $x_i^{p^e} H^j_\fm\left(R/\fq_{i-1}^{\fbp{p^e}}\right)$ vanishes. Hence $\alpha_e$ is injective, so the preimage is nilpotent. Huneke-Katzman-Sharp-Yao showed in \cite{HKSY06} $\fte R <\infty$ in this case by different methods, but Quy showed in \cite{Quy19} an explicit bound of the form $\fte R \le e_0 + \sum_{k=0}^d \binom{d}{k} \HSL_k(R)$. 
    \item If $R$ is weakly $F$-nilpotent (i.e. $\fdp R = d$), then by exploiting the fact that $H^j_\fm(R)$ is nilpotent for all $j<d$, Quy obtained a similar bound in \cite{Quy19} of the form $\fte R \le \sum_{k=0}^d \binom{d}{k}\HSL_k(R)$. 
    \item If $R$ is generalized weakly $F$-nilpotent (i.e. $\gfdp R = d$), in \cite{Mad19}, the first named author showed that the previous two proofs could be combined to show that there is an $e_0$ such that $\fte R \le e_0 + \sum_{k=0}^d \binom{d}{k} \HSL_k(R)$.
    \item If $\gfdp R \ge t$, then Huong-Quy (\cite{HQ22}) showed that any filter regular sequence of $x_1,\ldots,x_t$ has finite Frobenius test exponent.
\end{enumerate} 

Considering point (5) above, it is natural to ask if any uniformity on the Frobenius test exponents of some class of ideals is attained if $\gfdp_J R \ge t$. It turns out, as long as the filter regular sequence is contained in $J$, then the proof of \cite[Thm. 3.6]{HQ22} still applies.

\begin{theorem}\label{thm: gfdp_J and filter regular seqeunces}
Let $(R,\fm)$ be a local ring of dimension $d$ and of prime characteristic $p>0$, and let $J$ be an ideal of $R$ such that $\gfdp_J(R) \ge t$. There is an integer $C$ so that $\fte \fQ_{J,t}\le C$.    
\end{theorem}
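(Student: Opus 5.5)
We argue by induction on $i \le t$, following the ladder from the preceding discussion. The claim we aim to prove is: for each $0 \le i \le t$ and each $0 \le j < t-i$, there is a constant $C_{i,j}$, depending only on $R$ and $J$ and not on the sequence, with
\[
\HSL^\rho_j(R/\fq_i) \le C_{i,j} \quad \text{for every filter regular sequence } x_1,\ldots,x_t \text{ in } J .
\]
Here $\fq_i = (x_1,\ldots,x_i)$, and we use the relative Frobenius action $\rho^e \colon H^j_\fm(R/\fq_i) \to H^j_\fm(R/\fq_i^{\fbp{p^e}})$.

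The two ends of the induction are as follows.
\begin{itemize}
\item For $i = 0$ we may take $C_{0,j} = \HSL_j(R)$, which is finite by \Cref{thm:HSL}.
\item At $i = t$ and $j = 0$ we need the finer quantity that actually computes Frobenius test exponents, namely the exponent $e$ with $\fq_t^F \subset \fq_t^{\fbp{p^e}} :_R$ (something controlled). Since $\fq_t$ need not be a full parameter ideal, $H^0_\fm(R/\fq_t) \neq R/\fq_t$, so $\fte \fq_t$ is not directly $\HSL^\rho_0$. We handle this by following \cite[Thm. 3.6]{HQ22}: strengthen the inductive claim so that it also bounds the exponent for which $\rho^e$ kills the Frobenius-nilpotent part of $R/\fq_i$ modulo $H^0_\fm$, and then use \Cref{rmk: filter regular facts} (regularity after localizing away from $\fm$) to pass to $\fq_t^F/\fq_t$.
\end{itemize}

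\textbf{Key input from the hypothesis.} By \Cref{lem:singlej}, $\gfdp_J(R) \ge t$ gives $J \subset \fb_j(R)$ for all $j < t$. Since $R$ is noetherian, fix one $N$ and one $e_1$ such that
\[
J^N H^j_\fm(R) \subset 0^F_j \quad \text{and} \quad F^{e_1}\bigl(J^N H^j_\fm(R)\bigr) = 0 \qquad \text{for all } j < t .
\]
We must transport this annihilation-up-to-nilpotence to $H^j_\fm(R/\fq_{i-1})$ for $j < t-i+1$, uniformly in the sequence. Because each $x_k$ lies in $J$, the same ladder argument used inductively should show that $x^{Np^{e}}$ composed with a fixed Frobenius iterate kills $H^j_\fm(R/\fq_{i-1}^{\fbp{p^e}})$ modulo nilpotents, for every $x \in J$, with exponent bounds depending only on $i$, $j$, $N$, $e_1$ and the $\HSL_k(R)$.

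\textbf{Inductive step.} Take $\eta \in H^j_\fm(R/\fq_i)$ with $\rho^e(\eta) = 0$ for some $e$. Then $\delta_0(\eta)$ is $\rho$-nilpotent in $H^{j+1}_\fm(R/\fq_{i-1})$, so by induction it is killed by $\rho^{C_{i-1,j+1}}$. After that many Frobenius iterates, $\eta$ lies in the image of $\alpha$, say $\eta = \alpha(\xi)$. Multiplication by $x_i^{p^e}$ then appears in the long exact sequence, and this is exactly where $x_i \in J$ is used. Replacing $x_i$ by a large power is harmless, since filter regular sequences are preserved by taking powers. Using the uniform annihilation above, the element $x_i^{p^{e}}$ acts nilpotently, with a controlled exponent, on $H^j_\fm(R/\fq_{i-1}^{\fbp{p^{e}}})$. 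Hence the preimage $\xi$ is $\rho$-nilpotent after a bounded number of further steps, and applying $C_{i-1,j}$ finishes the chase. This yields a recursion of the shape
\[
C_{i,j} \le C_{i-1,j} + C_{i-1,j+1} + c(N, e_1).
\]

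\textbf{Main obstacle.} The delicate step is the uniform control of the preimage under $\alpha_e$, namely showing that $x_i^{p^e}$ kills $H^j_\fm(R/\fq_{i-1}^{\fbp{p^e}})$ up to nilpotence with an exponent independent of the sequence. Unlike the generalized Cohen–Macaulay case, we only have $J^N H^j_\fm \subset 0^F_j$ rather than actual annihilation. The plan is to mimic the combination of the arguments of \cite{Mad19} and \cite{Quy19}, as in the proof of \cite[Thm. 3.6]{HQ22}, replacing $\fm$ by $J$ throughout. The restriction $j < t-i$ keeps every cohomological index used below $t$, which is precisely where the hypothesis $\gfdp_J(R) \ge t$ applies. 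Taking $C$ to be the final constant obtained at $i = t$, $j = 0$ then bounds $\fte \fQ_{J,t}$.
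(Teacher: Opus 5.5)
The ladder part of your plan matches the paper: a uniform ``$J^{2^i n_0}$ kills $H^j_\fm(R/\fq_i)$ modulo $0^\rho_j$'' statement (an adaptation of \cite[Lem. 3.5]{Mad19}), with $x_i\in J$ making the preimage under $\alpha_e$ nilpotent. There is a bookkeeping error in it, though. Your claim is stated for $0\le j<t-i$, and that range is empty at $i=t$, so the induction never produces the constant $C_{t,0}$ you use at the end. The relative HSL bounds should be claimed for $i+j\le t$; enlarging the range only brings in $C_{0,t}=\HSL_t(R)$, which is finite regardless. The hypothesis $\gfdp_J(R)\ge t$ is needed only in the preimage step at $(i,j)$, which involves $H^j_\fm(R/\fq_{i-1}^{\fbp{p^e}})$ with $(i-1)+j\le t-1$.

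The genuine gap is the passage from $C_{t,0}$ to $\fte\fq$. The ladder controls only $0^\rho_0(R/\fq)=(\fq^F\cap(\fq:_R\fm^\infty))/\fq$. Your remedy is to strengthen the induction on $i$ so it also controls the nilpotent part of $R/\fq_i$ modulo $H^0_\fm$. That cannot come out of the ladder: every term in it is $\fm$-torsion, whereas $x^{p^e}\in\fq^{\fbp{p^e}}:_R\fm^\infty$ is a condition at the primes $\fp\neq\fm$.

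The paper, following \cite[Thm. 3.6]{HQ22}, instead runs a separate descent on $s=\dim R/(\fq:_R x)$ for $x\in\fq^F$. Take a minimal prime $\fp$ of $\fq:_R x$ with $\dim R/\fp=s>0$. Then $x/1\in 0^\rho_0(R_\fp/\fq R_\fp)$, and $x_1,\dots,x_t$ is regular in $R_\fp$. A Katzman--Sharp-type bound therefore gives $x^{p^h}/1\in\fq^{\fbp{p^h}}R_\fp$, where $h$ bounds $\HSL(H^t_{\fp R_\fp}(R_\fp))$ uniformly in $\fp$; the paper takes $h=\HSL R$. Hence $\dim R/(\fq^{\fbp{p^h}}:_R x^{p^h})\le s-1$. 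After at most $d-t$ rounds, $x^{p^{e_2}}\in\fq^{\fbp{p^{e_2}}}:_R\fm^\infty$ with $e_2=(d-t)h$. Only then is the ladder bound $e_1$ used, applied to the filter regular sequence $x_1^{p^{e_2}},\dots,x_t^{p^{e_2}}$ in $J$, to get $\fte\fq\le e_1+e_2$.

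Your sketch is missing exactly these two ingredients. One is a local bound uniform over all primes, needed because the primes involved depend on $x$ and $\fq$. The other is the dimension descent that caps the number of localization steps. Invoking regularity of the sequence after localizing away from $\fm$ supplies neither.
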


\begin{proof}
Let $x_1,\ldots,x_t \in J$ be a filter regular sequence, and write $\fq = (x_1,\ldots,x_t)$ and $\fq_i = (x_1,\ldots,x_i)$ for $1 \le i \le t$. We remind the reader that for a local ring $(S,\mathfrak{n})$, set $0^\rho_j(S)$ for $0^\rho_{H^j_{\mathfrak{n}}(S)}$ is the relative orbit closure. 

Since $\gfdp_J(R) \ge t$, by an immediate adaptation of \cite[Lem. 3.5]{Mad19}, there is an $n_0$ such that $J^{2^in_0}H^j_\fm(R/\fq_i) \subset 0^\rho_j(R/\fq_i)$ for all $i+j\le t$. Hence, letting $e$ be such that $p^e \ge 2^tn_0$, we have $J^{\fbp{p^e}}$ annihilates $H^j_\fm(R/\fq_i)/0^\rho_j(R/\fq_i)$. In particular, for $i +j \le t$, \[\fq^{\fbp{p^e}} H^j_\fm\left(R/\fq_i^{\fbp{p^e}}\right)\subset 0^\rho_j\left(R/\fq_i^{\fbp{p^e}}\right).\] As in \cite[Prop. 3.3]{HQ22}, $\HSL_0^\rho(R/\fq_i) \le e_0 +  \sum_{k=0}^t \binom{t}{k} \HSL_k(R) =: e_1$.

The remainder of the proof follows that of \cite[Thm. 3.6]{HQ22}, but we reproduce their argument. Note that $0^\rho_0(R/\fq)$ is potentially slightly smaller than expected, \cite[Lem. 2.9]{HQ22} shows that $0^\rho_0(R/\fq) = \fq^F\cap (\fq:\fm^{\infty})/\fq$. Suppose $x \in \fq^F \setminus \fq$. When $\dim R/(\fq:_Rx) = 0$, $x \in \fq:\fm^{\infty}$ and so $$x+\fq \in \fq^F\cap (\fq:_R\fm^{\infty})/\fq = 0^\rho_0(R/\fq).$$ Thus $x^{p^{e_1}} \in \fq^{\fbp{p^{e_1}}}$. Hence, it suffices to find uniform $e_2$, so that $x^{p^{e_2}} \in \left(\fq^{\fbp{p^{e_2}}}:_R\fm^{\infty}\right)$.

Suppose $\dim R/(\fq:_R x) = s$. We verify, for $h=\HSL R$, that one has $\dim R/\left( \fq^{\fbp{p^{hs}}}:_R x^{p^{hs}}\right) = 0$. When $s=0$ the statement follows by definition so suppose $s>0$. Let $\fp$ be a prime minimal over $(\fq:_Rx)$ with $R/\fp=s$. By minimality, $x/1 \in (\fq R_\fp:_{R_\fp}(\fp R_\fp)^\infty)$. As Frobenius closure localizes, we see $x/1 \in (\fq R_\fp)^F$, so $x/1 \in 0^\rho_0(R_\fp/\fq R_\fp)$. Since $s>0$, we have $\fp \neq \fm$ so $\fq$ is generated by a regular sequence in $R_\fp$, and thus, by \cite[Rmk. 3.4 and Lem. 3.5]{HQ22}, $\HSL^\rho_0(R_\fp/\fq R_\fp) \le \HSL(H^t_{\fp R_\fp}(R_\fp)) \le h$. Consequently, $x^{p^h}/1 \in \fq^{\fbp{p^h}}R_\fp$. Thus we have $\dim R/\left(\fq^{\fbp{p^h}}:_R x^{p^h}\right) \le s-1$ as $R_\fp / \left(\left(\fq^{\fbp{p^h}}\right) R_\fp :_{R_\fp} x^{p^h}/1\right) = 0$ for all $\fp \in \Min R/(\fq:_Rx)$ with $\dim R/\fp = s$. 

Now as $\dim R/\left(\fq^{\fbp{p^{hs}}}:_R x^{p^{hs}}\right) = 0$ we obtain a uniform bound since $s \le d-t$ always. Thus, setting $e_2 := (d-t)h$, for $x \in \fq^F$, $x^{p^{e_2}} \in \fq^{\fbp{p^{e_2}}} :_R \fm^{\infty}$, and hence $x^{p^{e_2+e_1}} \in \fq^{\fbp{p^{e_2+e_1}}}$, i.e. $\fte \fq \le e_2 + e_1 = (d-t)h+e_1$ for all $\fq \in \fQ_{J,t}$.
\end{proof}

Now, we can specialize to the case that $J$ is inside $\fb_0(R) \cap \cdots \cap \fb_{t-1}(R)$, since this is the largest ideal $I$ with respect to which $\gfdp_I(R) \ge t$.

\begin{corollary}
Let $(R,\fm)$ be a local ring of dimension $d$. There is an $e_0=e_0(t)$ such that, for any filter regular sequence $x_1,\ldots,x_t$ inside $\fb_0(R) \cap \cdots \cap \fb_{t-1}(R)$, $\fte (x_1,\ldots,x_t) \le e_0$.
\end{corollary}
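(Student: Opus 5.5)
The plan is to obtain the corollary directly from \Cref{thm: gfdp_J and filter regular seqeunces}, applied with the largest ideal that the theorem allows. Fix $t$ and set $J := \fb_0(R) \cap \cdots \cap \fb_{t-1}(R)$. The first step is to check that $\gfdp_J(R) \ge t$. For this I will use the characterization stated after the definition of the annihilators $\fb_i(R)$: $\gfdp_J(R) \ge k$ exactly when $J \subset \fb_j(R)$ for all $0 \le j < k$. Equivalently, by \Cref{lem:singlej}, $J\subset \fb_j(R)$ means that $J^N H^j_\fm(R) \subset 0^F_j$ for some $N$. Since $J$ lies in each $\fb_j(R)$ for $j<t$ by construction, no index $j<t$ can witness the infimum in the definition of $\gfdp_J(R)$. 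Hence $\gfdp_J(R)\ge t$.

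The second step is to apply \Cref{thm: gfdp_J and filter regular seqeunces} to this $J$. It gives an integer $C$, depending only on $R$, $J$ and $t$, with $\fte \fQ_{J,t} \le C$. Since $J$ is determined by $R$ and $t$, we may set $e_0(t) := C$. Any filter regular sequence $x_1,\ldots,x_t$ contained in $\fb_0(R)\cap\cdots\cap\fb_{t-1}(R)$ generates an ideal in $\fQ_{J,t}$, so $\fte(x_1,\ldots,x_t)\le e_0$. I will also note that the proof of the theorem gives the explicit value $e_0 = (d-t)\HSL(R) + e_1$, where $e_1$ comes from the nilpotence exponent $n_0$ and the HSL numbers $\HSL_k(R)$ as in that proof.

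The only point that needs care is the edge behaviour of the definition of $\gfdp_J$. If $J$ is contained in every $\fb_j(R)$, the invariant may be $\infty$; this is harmless because only $\ge t$ is needed. I also need to confirm the hypotheses of the theorem. As stated, it assumes only a local ring of dimension $d$ with $t \le d$, and the corollary implicitly assumes $t\le d$ as well, so the statements match. If the theorem is read with the standing $F$-finiteness assumption from the Preliminaries, the corollary inherits that assumption, and nothing more is required.
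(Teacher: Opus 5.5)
Your proposal is correct and follows the paper's own argument: the paper also takes $J = \fb_0(R)\cap\cdots\cap\fb_{t-1}(R)$, notes that it is the largest ideal with $\gfdp_J(R)\ge t$ because $J\subset\fb_j(R)$ for all $j<t$, and applies \Cref{thm: gfdp_J and filter regular seqeunces} to get a uniform bound on $\fte$ over $\fQ_{J,t}$. Your remarks on the implicit hypotheses ($t\le d$ and the standing $F$-finiteness assumption) are appropriate and do not change the argument.
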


\bibliographystyle{amsplain}
\bibliography{ref.bib}
\end{document}